\documentclass[11pt,reqno]{amsart}
\usepackage[margin=1.05in]{geometry}
\usepackage[T1]{fontenc}
\usepackage[utf8]{inputenc}
\usepackage{lmodern,amsmath,amssymb,booktabs,array,microtype,needspace}
\usepackage{float}
\usepackage[hidelinks]{hyperref}
\hypersetup{pdfauthor={Enkai Zhang},pdftitle={Exact diameter compression for four-point sumset cardinalities}}
\newtheorem{theorem}{Theorem}[section]
\newtheorem{lemma}[theorem]{Lemma}
\newtheorem{proposition}[theorem]{Proposition}
\newtheorem{corollary}[theorem]{Corollary}
\theoremstyle{definition}
\theoremstyle{remark}

\newcommand{\Z}{\mathbb Z}

\newcommand{\calR}{\mathcal R}
\newcommand{\ndiam}{\operatorname{ndiam}}

\title[Four-point sumset compression]{Exact diameter compression for four-point sumset cardinalities}
\author{Enkai Zhang}
\address{University of Toronto Scarborough, Toronto, Ontario, Canada}
\email{ek.zhang@mail.utoronto.ca}
\keywords{iterated sumset, sumset cardinality, diameter compression, additive relation, representation fiber, diameter rigidity}
\date{}
\begin{document}
\begin{abstract}
The least interval diameter needed to realize every attainable
cardinality of an $h$-fold sumset of a four-element integer set is
$\binom{h+2}{2}+(h\bmod2)$ for $2\le h\le10$ and
$d_h=(h-3)(h-1)$ for $h\ge11$.
We prove the formula uniformly for $h\ge17$ and use complete
finite searches, with a relation-rank coverage argument, for
the remaining orders.
For the lower bound, we show that every integer four-set with
$\binom{h+3}{3}-17$ distinct $h$-fold sums has normalized
diameter in $\{d_h,d_h+1,d_h+2\}$ when $h\ge9$,
and we construct sets attaining $d_h$.
For the upper bound, a relation-lattice index argument
restricts larger minimum-diameter representatives to two or
three active directions. Two general constructions and four
special defects then provide short sets realizing every
remaining cardinality.
\end{abstract}
\maketitle

\section{Introduction}\label{sec:intro}
For a finite set $A\subset\Z$ and an integer $h\ge1$, write
\[
 hA=\{a_1+\cdots+a_h:a_1,\ldots,a_h\in A\}.
\]
Repetition is allowed. We study the cardinality spectrum
\[
 \calR(h,4)=\{|hA|:A\subset\Z,\ |A|=4\}
\]
and the smallest integer interval that realizes all its values.
Following the diameter convention in \cite[Section 10.2]{Shin},
put
\begin{equation}\label{eq:nu}
 \nu(h,4)=\min\{D\in\Z_{\ge0}:
 \calR(h,4)=\{|hA|:A\subset[0,D]\cap\Z,\ |A|=4\}\}.
\end{equation}
An interval of diameter $D$ contains $D+1$ integer positions.
The definition concerns the number of distinct sums. Two sets
realizing the same cardinality may have different labeled
additive relations.

Nathanson \cite{Nathanson,NathansonCompression} and Shin
\cite{Shin} studied these spectra and their realization in
finite intervals.
We determine the universal diameter in the four-point case.

\Needspace{12\baselineskip}
\begin{theorem}[Exact compression]\label{thm:compression}
For every integer $h\ge2$,
\begin{equation}\label{eq:exactnu}
 \nu(h,4)=
 \begin{cases}
 \displaystyle\binom{h+2}{2}+(h\bmod2),&2\le h\le10,\\[3pt]
 (h-3)(h-1),&h\ge11.
 \end{cases}
\end{equation}
The proof for $h\ge17$ is independent of finite enumeration.
The cases $2\le h\le16$ use the complete finite calculation
in Section~\ref{sec:computation}.
\end{theorem}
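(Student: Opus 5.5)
The plan splits into a lower bound and an upper bound, each handled uniformly for $h\ge17$; the orders $2\le h\le16$ are settled by the complete search of Section~\ref{sec:computation}.

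\emph{Normal form.} Normalize every four-set as $A=\{0,a,b,c\}$ with $0<a<b<c$ and $\gcd(a,b,c)=1$, and call $c$ the normalized diameter $\ndiam A$. Translation and dilation preserve $|hA|$, so $\nu(h,4)$ is the largest, over attainable cardinalities $N$, of the least normalized diameter among sets with $|hA|=N$. Write $|hA|=\binom{h+3}{3}-\delta$, where the deficiency $\delta$ counts the collisions among the $\binom{h+3}{3}$ formal sums. These collisions are governed by the relation lattice $L=\{(x_1,x_2,x_3)\in\Z^3: x_1a+x_2b+x_3c=0\}$, a rank-two lattice of determinant comparable to $c$.

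\emph{Lower bound.} We single out the cardinality $N_h=\binom{h+3}{3}-17$. The first task is to classify all sets with deficiency exactly $17$. A deficiency this small forces the shortest nonzero vector of $L$ to have $\ell^1$-type weight about $h$. The reason is that a relation $r\in L$ of weight $w\le h$ produces on the order of $\binom{h-w+3}{3}$ collisions, which is far more than $17$ unless $w$ is close to $h+1$. Next we enumerate the relation vectors of weight near $h$ and count the collisions contributed by one relation and by pairs of relations, i.e.\ the fiber sizes of the map from multisets to sums. This should show that deficiency $17$ arises only from a few specific relation shapes. Solving those shapes for $(a,b,c)$ should give $c\in\{d_h,d_h+1,d_h+2\}$ for $h\ge9$, and hence $\nu(h,4)\ge d_h$. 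To see that $N_h$ is attained, we exhibit an explicit family with $c=(h-3)(h-1)$ and verify that its deficiency is $17$ by the same fiber count.

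\emph{Upper bound.} We must show that every attainable $N$ is realized by some set with $c\le d_h$. Fix a minimum-diameter representative $A$ with $c>d_h$. A large $c$ means $\det L$ is large, so the short relations of weight at most $h$ are sparse. The index argument then shows that at most two or three of the coordinate directions take part in collisions, so $|hA|$ depends only on a small number of parameters. For each such configuration we take the corresponding deficiency values and realize them with short sets, using two general constructions. The first is a three-term progression with a fourth point, which realizes deficiency families parametrized by a single relation. The second is a two-relation construction in which the deficiency is a sum of two triangular-type counts. Four special defects, which these families leave uncovered, are handled by individual sets. In every case we check $c\le d_h$; once $h\ge11$ the quadratic $(h-3)(h-1)$ exceeds $\binom{h+2}{2}$-type sizes of the constructions.

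\emph{Expected obstacle.} The hardest step is the exhaustive classification for deficiency $17$ in the lower bound. Interactions between two independent short relations can combine small collision counts, so the fibers must be counted exactly rather than bounded. I would first bound the second successive minimum of $L$ using $\lambda_1\lambda_2\asymp\det L$. This should reduce the problem to finitely many relation shapes, each polynomial in $h$, which can then be checked symbolically for $h\ge17$. Coverage in the upper bound is the second most delicate step: one must verify that the union of the construction families together with the four defects covers every deficiency value permitted by the two- or three-direction analysis.
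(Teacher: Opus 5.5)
Your architecture matches the paper's: defect-seventeen rigidity for the lower bound, the replacement principle with a lattice-index argument for the upper bound, and exhaustive search for $h\le16$. However, the upper-bound coverage fails as you have set it up.

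\textbf{The three-direction case.} Suppose a large representative has three active directions $u,v,u\pm v$ of degrees $h-j_1,h-j_2,h-j_3$, and neither triangle orientation fits in degree $h$. Then its defect is $M_{j_1}+M_{j_2}+M_{j_3}$ with no triple-fiber correction. The admissible degree patterns permit $3,6,9,12,15,18,22$ in this way (e.g.\ $3=1+1+1$, $22=20+1+1$), and none of these is a sum of at most two members of $\{1,4,10,20\}$. So your one-relation family and your two-relation family (defect a sum of two terms) cannot reach them. Four individual sets cannot absorb them either, because $17=10+4+4-1$, $23$, $26$, $29$ already need four: they come from triple-fiber corrections and are not sums of basic defects. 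You need a third general construction that places three relations at prescribed degrees $r_1,r_2,r_3\in[h-3,h]$ while keeping both triangle heights above $h$. The paper uses $\{0,b-\alpha,b,r_1b-\beta(b-\alpha)\}$ with $b=r_3$, $b/2-2\le\alpha\le b/2$ coprime to $b$, and $\beta=r_1+\alpha-r_2$; its diameter is at most $\tfrac34h^2+4$.

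\textbf{The rank-one case.} Your index argument does not reach it, since there is no second active relation. Such sets can be arbitrarily long, e.g.\ $\{0,1,N,N+r-1\}$ with $N$ huge, and have defect $M_{h-r}$ for any $2\le r\le h$. A set containing a three-term progression always carries the degree-two relation on those points, so it realizes only $r=2$. You need a one-relation family of every degree. The paper uses $\{0,1,(r-1)h+1,(r-1)h+r\}$ for $r\le h-4$. For $r\ge h-3$ that set exceeds $d_h$, and the paper instead uses $\{0,1,q,q+m\}$ of diameter about $h^2/2$, which also covers all one- and two-term sums.

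\textbf{The lower bound.} The estimate $\lambda_1\lambda_2\asymp\det L$ locates $c$ only up to a constant factor, so it cannot give $c\ge d_h$ exactly. The exact bound rests on three facts.
\begin{enumerate}
\item The active vectors inject into $L/3L$, so there are at most four directions, and fibers have at most three points. Hence the defect equals the number of collision edges minus the number of triple fibers. At defect $17$ this forces degrees $h-2,h-1,h-1$ and a unique triple fiber.
\item The two edges of that triple form a \emph{basis} of $L$. Therefore the normalized set is, up to translation and sign, the projected cross product itself rather than a proper divisor of it.
\item The pairwise coordinate minima of the triple have total degrees $2,1,1$ and zero common minimum. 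This reduces the triple to finitely many exponent patterns with one free parameter, over which the range is convex.
\end{enumerate}
Without the second fact, ``solving the shapes for $(a,b,c)$'' gives only the cross product divided by an unknown lattice index, and no lower bound follows.
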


For example, at $h=11$ the set
\begin{equation}\label{eq:counterexample}
 A=\{0,7,17,80\}
\end{equation}
has $347$ distinct sums. Theorem~\ref{thm:rigidity} below
shows that every four-set with this cardinality needs
diameter at least $80$. The maximal cardinality $364$ already
has a representative of diameter $79$
\cite[Theorem 10.3]{Shin}.

\Needspace{4\baselineskip}
Put $M_j=\binom{j+3}{3}$ for $j\ge0$, the number of
multisets of size $j$ on four elements, and call
$\Delta_h(A)=M_h-|hA|$ the \emph{defect}.
The lower bound in the second branch of \eqref{eq:exactnu}
is supplied by a single near-maximal cardinality.
For an attainable defect $\delta$, let
\[
 \eta_\delta(h)=\min\{\operatorname{ndiam}(A):
             A\subset\Z,\ |A|=4,\ \Delta_h(A)=\delta\},
\]
where $\operatorname{ndiam}(A)$ is the diameter after translation
and division by the gcd of all differences.
These operations preserve every sumset cardinality.
The notation follows \cite[Section 10.3]{Shin}, where $\eta_2$
is determined.

\Needspace{15\baselineskip}
\begin{theorem}[Defect-seventeen rigidity]\label{thm:rigidity}
Let $h\ge9$ and put $d_h=(h-3)(h-1)$.
Every integer four-set $A$ with $|hA|=M_h-17$ satisfies
\[
 \operatorname{ndiam}(A)\in\{d_h,d_h+1,d_h+2\}.
\]
Moreover, $\eta_{17}(h)=d_h$. The minimum is attained by
\begin{equation}\label{eq:uniform}
 A_h=
 \begin{cases}
 \{0,h-4,2h-5,d_h\},&h\not\equiv1\pmod3,\\
 \{0,h-3,2h-3,d_h\},&h\equiv1\pmod3.
 \end{cases}
\end{equation}
\end{theorem}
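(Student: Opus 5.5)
The plan is to parametrize normalized four-sets as $A=\{0,a,b,c\}$ with $0<a<b<c$ and $\gcd(a,b,c)=1$. We then express the defect $\Delta_h(A)$ through the additive relations among the elements. A coincidence in $hA$ is a pair of distinct multisets of size $h$ with equal sums. Their difference is a nonzero vector $v=(v_0,v_1,v_2,v_3)\in\Z^4$ with $\sum v_i=0$ and $v_1a+v_2b+v_3c=0$. These vectors form the relation lattice $L$, which has rank $2$. The defect $\Delta_h(A)$ is then the number of multisets of size $h$ minus the number of classes of multisets modulo $L$.

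\emph{Reduction to primitive relations.} If $v\in L$ has $\ell^1$-height $\|v\|_1/2=k$, it identifies pairs of multisets containing a common part of size $h-k$. A single primitive relation of height $k\le h$ alone produces defect $M_{h-k}$: every multiset that can absorb the positive part collapses with its image. The key numerology is $M_1=4$, $M_2=10$, $M_3=20$, and in general $M_j\ge 20>17$ for $j\ge3$. So defect $17$ forces every relation to have height at least $h-2$. The relations of height exactly $h-2$ and $h-1$ must then combine their contributions, with inclusion--exclusion over chains $u\sim u+v\sim u+2v$ and over pairs $v,w\in L$, to total exactly $17$. I would enumerate the finitely many combinatorial configurations of short vectors of $L$ with heights in $\{h-2,h-1,h\}$ and check which give defect $17$. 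Likely candidates are one relation of height $h-2$ (contributing $10$) plus further relations of height $h-1$ (contributing $4$ each) and of height $h$, with overlaps subtracted. This enumeration is valid uniformly for $h\ge9$ because overlaps between two relations require height sums $\le h$ only in bounded patterns.

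\emph{Diameter from the lattice.} In each surviving configuration we have two independent relations $v,w$ of known heights and sign patterns. Solving $v\cdot(0,a,b,c)=w\cdot(0,a,b,c)=0$ gives $(a,b,c)$ proportional to the cross product of the projected vectors, divided by the lattice index. Each coordinate of this cross product is a quadratic polynomial in $h$ determined by the support pattern. We will show that admissible patterns yield $c\in\{(h-3)(h-1),(h-3)(h-1)+1,(h-3)(h-1)+2\}$ after dividing by the gcd, which is why the three values appear. The gcd condition accounts for the split by $h\bmod 3$ in \eqref{eq:uniform}. For instance, with $A=\{0,h-4,2h-5,d_h\}$ the relations $(h-3)\cdot(2h-5)=\dots$ involve height-$(h-2)$ combinations, and we check that they collapse precisely $17$ pairs. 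For $h\equiv1\pmod3$ the first family has gcd divisible by $3$ or acquires an extra coincidence, so we switch to $\{0,h-3,2h-3,d_h\}$. The attainment claim $\eta_{17}(h)=d_h$ then follows by a direct count of $|hA_h|$ using the same inclusion--exclusion.

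\emph{Main obstacle.} The hard step is the exhaustive classification of the short-vector configurations of $L$, together with the exact inclusion--exclusion count. Relations of height $h-1$ and $h$ interact through sums and differences, and one must rule out configurations where a third lattice vector such as $v-w$ is short and changes the count. The first attempt would use a reduced basis $v,w$ of $L$ (Gauss reduction in the plane $\sum x_i=0$ projected), where every vector of height $\le h$ is $\pm v,\pm w,\pm(v\pm w)$. This caps the case list at a small number of sign and support patterns, each handled by a direct count. The restriction $h\ge9$ should be exactly what makes these counts stabilize, with no accidental short relations in small $h$.
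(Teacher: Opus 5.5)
Your framework is the right one: the relation lattice, the padding count, $M_3=20$ forcing every relation degree to be at least $h-2$, and recovering the set from a projected cross product. But the two steps that make up the theorem are only announced, and the way you describe the second one would not work.

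\textbf{The classification at defect $17$.} You leave this as ``likely candidates''. You need to prove the following.
\begin{itemize}
\item There are at most four unsigned active directions. For instance, active vectors inject into $L/3L$, since $2h/3<h-2$.
\item Every fiber has at most three points. Four points repeat a direction, giving either an active $2u$ or a parallelogram of height $\rho(u)+\rho(v)>h$.
\item At most one direction has degree $h-2$.
\item At most one triangle type and orientation occurs, and its height is $h-1$ or $h$.
\end{itemize}
These give $\Delta_h=10\varepsilon+4b+c-T$ with $T\in\{0,1,4\}$. The only solution at $17$ is degrees $h-2,h-1,h-1$ with exactly one triple fiber. The overlaps you propose, chains $u\sim u+v\sim u+2v$, cannot occur, since they need an active $2v$; the relevant overlap is a triangle with three distinct directions. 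Your claim that, for a Gauss-reduced basis, every short vector is $\pm v,\pm w,\pm(v\pm w)$ also needs proof. It is not automatic for a general norm and depends on the choice of reduced basis.

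\textbf{The diameter step.} This is the more serious gap. Degrees and sign patterns do not determine the relations, so the cross product is not ``a quadratic polynomial in $h$ determined by the support pattern''. The admissible configurations come in one-parameter families, and within a single family the diameter moves among $d_h,d_h+1,d_h+2$.

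The missing idea is to work with the unique triple fiber $P,Q,R\in U_h$. The identity $\sum_i\min(P_i,Q_i)=h-\rho(P-Q)$ turns the degrees into pairwise coordinate minima of masses $2,1,1$. Their common minimum must be zero, since otherwise the triangle translates down and yields four triple fibers. This forces two base supports plus extra exponents with one free parameter $t$. You then need two further facts.
\begin{itemize}
\item[(a)] $Q-P$ and $R-P$ form a basis of $L_A$. Otherwise ``dividing by the lattice index'' could push the diameter below $d_h$, and the lower bound collapses. The paper gets this from convexity of $\rho$ on $\operatorname{conv}\{0,u,v\}$: any further lattice point there would be a new active direction of degree at most $h-1$.
\item[(b)] In each family, some pair of normal coordinates has $t$-independent separation $d_h$, $d_h+1$ or $d_h+2$, which gives the lower bound. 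The range is convex in $t$, so endpoint checks give the upper bound.
\end{itemize}

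\textbf{Attainment.} ``We check that they collapse precisely $17$ pairs'' must be replaced by three things. You need a complete list of relations of degree at most $h$ for $A_h$ and the edge count $M_2+2M_1=18$. You also need a proof that exactly one triangle orientation has height at most $h$, namely exactly $h$ with a single padding. Together these give defect $18-1=17$.
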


The proof of Theorem~\ref{thm:rigidity} is also independent of
enumeration. At defect seventeen there are three active relation
directions, fifteen pair fibers and a unique triple fiber.
The latter has pairwise coordinate minima of sizes $2,1,1$.
Two support patterns and their exponent assignments determine
the restriction to three consecutive diameters.

For the upper bound in Theorem~\ref{thm:compression}, we may
replace a set by any other four-set with the same sumset
cardinality. After treating active rank at most one,
we show that a larger representative has only two or three
active directions. Their indices and degrees leave only
sums of basic defects and the four exceptional
defects $17,23,26,29$. We construct sufficiently short
representatives for all of them. Lemma~\ref{lem:replacement}
states precisely how this replacement proves an upper bound
for $\nu$.

\begin{corollary}\label{cor:asymptotic}
We have $\displaystyle\lim_{h\to\infty}\nu(h,4)/h^2=1$.
\end{corollary}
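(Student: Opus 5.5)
The corollary should follow from Theorem~\ref{thm:compression} by a squeeze. No new structural input is needed.

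For every $h\ge11$ the second branch of \eqref{eq:exactnu} gives $\nu(h,4)=(h-3)(h-1)=h^2-4h+3$. Dividing by $h^2$ gives
\[
\frac{\nu(h,4)}{h^2}=1-\frac{4}{h}+\frac{3}{h^2}\qquad(h\ge 11).
\]
The right-hand side tends to $1$ as $h\to\infty$, and since the limit only depends on the tail, the finitely many orders $2\le h\le10$ governed by the first branch are irrelevant.

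The only subtlety is the logical dependence, not any computation. The limit uses the exact formula only for large $h$, and Theorem~\ref{thm:compression} states that its proof for $h\ge17$ is independent of finite enumeration. So I would note that the corollary rests solely on the uniform argument and not on the computer search of Section~\ref{sec:computation}.

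If one wants to see the two halves of the asymptotic separately, they come from the two halves of that uniform argument. The lower bound $\nu(h,4)\ge d_h$ comes from Theorem~\ref{thm:rigidity}: the cardinality $M_h-17$ is attainable (by $A_h$), but every representative has normalized diameter at least $d_h$. Hence any interval realizing the whole spectrum $\calR(h,4)$ must have diameter at least $d_h$, since normalization never increases diameter. The matching upper bound comes from the replacement argument via Lemma~\ref{lem:replacement}.

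Even a weaker upper bound $\nu(h,4)\le h^2+O(h)$ would suffice for the limit. I expect no genuine obstacle here, provided the theorem is taken as given.
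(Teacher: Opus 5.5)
Your proposal is correct and matches the paper's proof, which simply divides the second branch of \eqref{eq:exactnu} by $h^2$. Your side remarks are also accurate: for large $h$, the bound $\nu(h,4)\le\max\{h^2,D_h^{\max}\}=h^2$ from \eqref{eq:shinbounds}, combined with the lower bound $\nu(h,4)\ge d_h$ from Theorem~\ref{thm:rigidity}, would already give the limit without the uniform upper bound.
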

\begin{proof}
Divide the second branch of \eqref{eq:exactnu} by $h^2$.
\end{proof}

\subsection{Relation to previous work}
The relation-lattice framework, low-rank cardinality formula and
product diameter bound used here are established in
\cite{OBryant,Shin}. We recall their elementary proofs.
Shin's Theorem 10.3 determines the least diameter for the
maximal cardinality $M_h$, equivalently for a four-mark
$B_h$-ruler:
\begin{equation}\label{eq:ruler}
 D_h^{\max}=\binom{h+2}{2}+(h\bmod2).
\end{equation}
His Corollary 10.5 gives
\begin{equation}\label{eq:shinbounds}
 D_h^{\max}\le\nu(h,4)\le\max\{h^2,D_h^{\max}\}.
\end{equation}
Thus the quadratic order of growth is known.
The present result determines the exact universal diameter
and the defect-seventeen minimum and rigidity.

The question in \cite[Section 10.2]{Shin} asks whether
$\nu(h,4)=D_h^{\max}$ for every $h\ge2$.
Theorem~\ref{thm:compression} identifies the first failure and
all subsequent values.

\subsection{Proof organization}
Section~\ref{sec:coverage} gives the relation and fiber tools.
Sections~\ref{sec:small}--\ref{sec:family} prove the
defect-seventeen lower bound and rigidity.
Sections~\ref{sec:large}--\ref{sec:special} prove the uniform
upper bound for $h\ge17$.
Section~\ref{sec:computation} verifies the remaining orders and
records independent checks.
The computations at $17\le h\le20$ are retained as controls,
although the general proof now covers those orders.

\section{Relations and representation fibers}\label{sec:coverage}
For an integer set with at least two elements, let
\[
 g(A)=\gcd\{a-a':a,a'\in A\},\qquad
 \ndiam(A)=\frac{\max A-\min A}{g(A)}.
\]
The set is \emph{primitive} if $g(A)=1$.
Translation and division by $g(A)$ preserve every sumset cardinality.
Thus a minimum-diameter representative can be taken in the form
\begin{equation}\label{eq:normalized}
 A=\{0,a,b,D\},\qquad
 0<a<b<D,\qquad \gcd(a,b,D)=1.
\end{equation}

The homogeneous relation lattice of this ordered set is
\[
 L_A=\left\{z\in\Z^4:
 \sum_{i=0}^3z_i=0,\quad az_1+bz_2+Dz_3=0\right\}.
\]
For $z\in L_A$, write $z=z^+-z^-$ with disjoint nonnegative
parts, and define its \emph{degree} by
\[
\rho(z)=\sum_i z_i^+=\sum_i z_i^-.
\]
For any subset $I\subseteq\{0,1,2,3\}$,
\begin{equation}\label{eq:subsetmass}
 \left|\sum_{i\in I}z_i\right|\le\rho(z).
\end{equation}
Both the positive and negative mass are $\rho(z)$, so the sum
over a subset lies between $-\rho(z)$ and $\rho(z)$.
A relation is \emph{active at degree $h$} if it is nonzero and
$\rho(z)\le h$. The \emph{active rank} is the real dimension of
the span of the active relations; it is at most two.
An \emph{unsigned active direction} is an active relation up
to sign. Whenever twice the least relation degree exceeds $h$,
every active relation is primitive and there are no active
proper multiples.

Let
\[
 U_h=\{t\in\Z_{\ge0}^4:t_0+t_1+t_2+t_3=h\}.
\]
The map $t\mapsto t\cdot(0,a,b,D)$ sends $U_h$ onto $hA$.
The inverse image of a sum will be called its \emph{representation
fiber}. Any difference of two distinct points in a fiber is active. Conversely,
if $\rho(z)\le h$, the pairs
\[
 \gamma+z^-,\quad \gamma+z^+,\qquad
 \gamma\in U_{h-\rho(z)},
\]
are exactly the pairs of degree-$h$ representations differing by $z$.

The following form of the four-point product bound
\cite[Lemma 3.1]{Shin} makes the lattice index explicit.

\begin{lemma}[Product diameter bound]\label{lem:diameter}
If $u,v\in L_A$ are linearly independent, then
\begin{equation}\label{eq:product_index}
 [L_A:\Z u+\Z v]\,D\le \rho(u)\rho(v).
\end{equation}
Consequently active rank two implies $D\le h^2$.
\end{lemma}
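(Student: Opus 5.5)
The plan is to compute the index $[L_A:\Z u+\Z v]$ through a $2\times2$ minor of the pair $(u,v)$, and then bound that minor by $\rho(u)\rho(v)$ using only the mass constraints on relation coordinates. Throughout, $A$ is normalized as in \eqref{eq:normalized}. Then $L_A$ is the kernel in $\Z^4$ of the integer matrix $M$ with rows $r_1=(1,1,1,1)$ and $r_2=(0,a,b,D)$. This kernel is saturated and of rank two.

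\emph{Step 1: identify the index with a minor.} For a saturated rank-two kernel lattice, the Plücker vector $u\wedge v$ equals $\pm[L_A:\Z u+\Z v]$ times a primitive integer vector. That primitive vector is the Hodge dual of $r_1\wedge r_2$ divided by the gcd of the $2\times2$ minors of $M$. The minor of $M$ on columns $\{i,j\}$ is the difference of the corresponding entries of $(0,a,b,D)$. These differences are $a,b,D,b-a,D-a,D-b$, whose gcd is $g(A)=1$. Matching the coordinate $\{1,2\}$ of $u\wedge v$ with the complementary minor on columns $\{0,3\}$, which equals $D$, gives
\[
 |u_1v_2-u_2v_1|=[L_A:\Z u+\Z v]\,D .
\]
I would verify this identity directly rather than quote exterior-algebra facts. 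Extend a basis of $L_A$ and use Cramer's rule on the system $z_0+z_3=-(z_1+z_2)$, $Dz_3=-(az_1+bz_2)$. This expresses $L_A$ as the set of pairs $(z_1,z_2)$ satisfying a single congruence modulo $D$, so its projection to the $(z_1,z_2)$-plane is injective with image of index $D$ in $\Z^2$. The projection of $\Z u+\Z v$ then has index $[L_A:\Z u+\Z v]\,D$ in $\Z^2$, and this index equals $|u_1v_2-u_2v_1|$. This step is the one needing care. The fact to check is that the projection image has index exactly $D$ in $\Z^2$, which uses $\gcd(a,b,D)=1$.

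\emph{Step 2: bound the minor.} Fix $x=(u_1,u_2)$. By \eqref{eq:subsetmass} and the definition of $\rho$, the positive parts satisfy $u_1^++u_2^+\le\rho(u)$ and the negative parts satisfy $u_1^-+u_2^-\le\rho(u)$. Hence $x$ lies in $\rho(u)H$, where $H$ is the hexagon with vertices
\[
(\pm1,0),\quad(0,\pm1),\quad\pm(1,-1).
\]
Likewise $(v_1,v_2)\in\rho(v)H$. The determinant is bilinear, so its absolute value over $H\times H$ is maximized at a pair of vertices. Checking the vertex pairs up to symmetry shows $|\det|\le1$; for example, $\det\big((1,0),(0,1)\big)=1$, $\det\big((1,-1),(0,1)\big)=1$ and $\det\big((1,-1),(-1,1)\big)=0$. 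Therefore $|u_1v_2-u_2v_1|\le\rho(u)\rho(v)$, which together with Step 1 gives \eqref{eq:product_index}.

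\emph{Step 3: the consequence.} If the active rank is two, choose linearly independent active $u,v$, so that $\rho(u),\rho(v)\le h$. The index is at least $1$, and \eqref{eq:product_index} yields $D\le h^2$.
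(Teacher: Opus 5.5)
Your proposal is correct and follows essentially the paper's proof: you identify $|u_1v_2-u_2v_1|$ with $[L_A:\Z u+\Z v]\,D$, then bound this minor by $\rho(u)\rho(v)$ using the same unit hexagon and bilinearity of the determinant. The only variation is in Step 1, where you get the identity from the congruence $az_1+bz_2\equiv0\pmod D$ that describes the projection of $L_A$ to the $(z_1,z_2)$-plane, whose index is exactly $D$ because $\gcd(a,b,D)=1$. The paper instead reads the identity off as the last component of $u'\times v'$, which is an index multiple of the primitive normal $(a,b,D)$.
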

\begin{proof}
Delete coordinate zero and denote the projected vectors by $u',v'$.
Their cross product is an integer multiple of $(a,b,D)$.
The absolute multiplier is $[L_A:\Z u+\Z v]$.
Indeed, a basis of the kernel of the primitive linear form
$(a,b,D)$, together with an integer vector on which the form
is one, is a basis of $\Z^3$. Its determinant is one up to
sign. Changing the kernel basis to $u',v'$ multiplies the
cross product by the determinant of that change, namely
the lattice index.

For any two coordinates $(x,y)$ of a zero-sum relation of degree $r$,
\[
 |x|\le r,\qquad |y|\le r,\qquad |x+y|\le r.
\]
Indeed the absolute sum over any subset of coordinates is at most
the total positive mass. The unit hexagon described by these three
inequalities has vertices
\[
 (1,0),(1,-1),(0,-1),(-1,0),(-1,1),(0,1).
\]
The absolute determinant of any two vertices is at most one.
By convexity in each argument, this remains true for any two points
of the hexagon. Hence every two-coordinate minor of $u',v'$ has
absolute value at most $\rho(u)\rho(v)$.
The last component of their cross product has absolute value
$[L_A:\Z u+\Z v]D$, proving the bound.
\end{proof}

\begin{lemma}[Replacing a realization]\label{lem:replacement}
Fix $h\ge2$ and $D_0\ge0$. Suppose that every integer four-set
$A$ with $\operatorname{ndiam}(A)>D_0$ has an integer four-set
$B$ satisfying
\[
 |hB|=|hA|,\qquad \operatorname{ndiam}(B)\le D_0.
\]
Then $\nu(h,4)\le D_0$.
\end{lemma}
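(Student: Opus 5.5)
We unwind the definition of $\nu(h,4)$ in \eqref{eq:nu}. Put
\[
S(D)=\{|hA|:A\subset[0,D]\cap\Z,\ |A|=4\}.
\]
By definition $S(D)\subseteq\calR(h,4)$ for every $D$, so $\nu(h,4)\le D_0$ as soon as the reverse inclusion $\calR(h,4)\subseteq S(D_0)$ holds. This is the minimum in \eqref{eq:nu} over a set that then contains $D_0$, and that set is nonempty because $\calR(h,4)$ is finite. Finiteness holds since $|hA|\le M_h$.

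To show $\calR(h,4)\subseteq S(D_0)$, fix $m\in\calR(h,4)$ and choose an integer four-set $A$ with $|hA|=m$. There are two cases.

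\emph{Case $\ndiam(A)\le D_0$.} Put $A'=(A-\min A)/g(A)$. Translation and division by $g(A)$ preserve sumset cardinalities, as noted in Section~\ref{sec:coverage}, so $|hA'|=|hA|$. Also $A'\subset[0,\ndiam(A)]\subseteq[0,D_0]$ and $|A'|=4$, hence $m\in S(D_0)$.

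\emph{Case $\ndiam(A)>D_0$.} The hypothesis supplies an integer four-set $B$ with $|hB|=m$ and $\ndiam(B)\le D_0$. Applying the first case to $B$ gives $m\in S(D_0)$.

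In both cases $m\in S(D_0)$, so $\calR(h,4)\subseteq S(D_0)$ and therefore $\nu(h,4)\le D_0$.

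There is no real obstacle here. The only points needing care are that normalization keeps the set inside $[0,D_0]$ with exactly four elements, and that the infimum in \eqref{eq:nu} is taken over a nonempty set.
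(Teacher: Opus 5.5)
Your proposal is correct and follows the paper's proof: normalize a realization of each attainable cardinality, keep it if its normalized diameter is at most $D_0$, and otherwise normalize the supplied set $B$ instead. Your remark that finiteness of $\calR(h,4)$ makes the minimizing set nonempty is harmless but unnecessary, since that set contains $D_0$ once the inclusion $\calR(h,4)\subseteq S(D_0)$ is established.
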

\begin{proof}
Take any attainable cardinality and a normalized realization.
If its diameter is at most $D_0$, it already lies in the
required interval. Otherwise, use the stipulated set $B$ and
normalize it. This realizes the same cardinality in
$[0,D_0]\cap\Z$. Since the cardinality was arbitrary, every
element of $\calR(h,4)$ occurs there.
\end{proof}

The set $B$ in this lemma can have different labeled
equalities from $A$. Our upper-bound proof classifies the
possible cardinalities of large representatives and then
constructs another representative of each size.

For a finite set $S$ of zero-sum integer vectors, define its
\emph{translation height} by
\[
 H(S)=-\sum_i\min_{s\in S}s_i.
\]
This is the least common coordinate sum after translating all
points of $S$ into the nonnegative orthant. A translate occurs
in $U_h$ precisely when $H(S)\le h$.

\begin{lemma}[Fibers with few active directions]\label{lem:fiber_tools}
Let $r$ be the least nonzero relation degree. Suppose $2r>h$
and there are at most four unsigned active directions.
Every representation fiber then has at most three points.
If $E$ is the number of unordered collision edges and $T$
the number of triple fibers, then
\begin{equation}\label{eq:edge_count}
 \Delta_h(A)=E-T,\qquad
 E=\sum_{\text{unsigned active }z}M_{h-\rho(z)}.
\end{equation}
\end{lemma}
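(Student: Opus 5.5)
The plan is to bound fiber sizes first and then count the defect fiber by fiber. Fix a sum $s\in hA$ with fiber $F\subseteq U_h$. Any two distinct points of $F$ differ by an active relation. Since $2r>h$, every active relation is primitive, and no two active relations differ by a relation of degree at most $h$ unless that difference is itself active. Suppose $F$ had four points $p_0,\dots,p_3$. Then the six differences $p_i-p_j$ are active, and they fall into at most four unsigned directions. A direction can occur only once among these differences. Indeed, if $p_i-p_j=\pm(p_k-p_l)$ with $\{i,j\}\ne\{k,l\}$, then either two points coincide or $p_i-p_k$ (say) equals $2(p_i-p_j)$ up to rearrangement, which is an active proper multiple; this is impossible because $\rho(2z)=2\rho(z)\ge 2r>h$. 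So four points give six distinct unsigned active directions, contradicting the hypothesis of at most four. Hence $|F|\le3$.

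Next I count. We have $|hA|=\sum_F 1$ and $M_h=\sum_F|F|$, so
\[
\Delta_h(A)=\sum_F(|F|-1).
\]
A fiber with $|F|$ points has $\binom{|F|}{2}$ unordered pairs. For $|F|\le3$,
\[
|F|-1=\binom{|F|}{2}-[\,|F|=3\,],
\]
since $1-1=0$, $2-1=1$ and $3-1=3-1$. Summing over fibers gives $\Delta_h(A)=E-T$, where $E$ counts unordered pairs of distinct representations with a common sum and $T$ counts triple fibers.

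Finally, $E$ is counted by direction. Each collision edge $\{p,q\}$ determines the unsigned active direction $\pm(p-q)$, and the two orientations give the same unordered edge. By the description preceding the lemma, the unordered pairs with difference $\pm z$ are in bijection with $\gamma\in U_{h-\rho(z)}$, via $\{\gamma+z^-,\gamma+z^+\}$. This bijection is well defined because $z^\pm$ have disjoint supports, so $\gamma$ is recovered as the coordinatewise minimum of the two points. There are $M_{h-\rho(z)}$ such $\gamma$. Distinct unsigned directions give disjoint edge sets, which yields the formula for $E$.

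The main obstacle is the four-point exclusion. One must check carefully that coincidences among the six differences force a proper multiple. The cleanest route is to note that the differences from $p_0$ are pairwise non-parallel: two differences from $p_0$ in the same direction would have a difference that is a multiple of an active relation. Then two of the remaining differences $p_i-p_j$ ($1\le i<j\le3$) can coincide with other differences only in limited ways, and linear dependence inside the rank-$2$ span must be handled. If six-versus-four is not immediate, I would instead argue as follows: the three differences $p_1-p_0$, $p_2-p_0$, $p_3-p_0$ and the three differences among $p_1,p_2,p_3$ give at least five distinct directions, because of the primitivity and no-multiples property. Five directions already exceeds four.
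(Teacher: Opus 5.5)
Your reduction $\Delta_h(A)=\sum_F(|F|-1)=E-T$ and your count of $E$ by unsigned directions are correct. The four-point exclusion, however, has a genuine gap.

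Your claim that a direction occurs at most once among the six differences ignores the case of two \emph{disjoint} edges. The equation $p_1-p_0=\pm(p_3-p_2)$ forces neither a coincidence nor a proper multiple. It only says that the four points form a parallelogram, which after translation is $\{0,u,v,u+v\}$ with $u,v$ independent. Its six differences are $u$ and $v$ (each twice), $u+v$ and $u-v$. That is exactly four unsigned directions, all active and hence primitive, with no proper multiple among them.

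So this configuration satisfies every property your argument uses: primitivity, no active multiples, and at most four directions. Both your count of six directions and your fallback count of five fail for it. The pairwise bounds $\rho(u\pm v)\le h$ do not close the gap either. For instance, the zero-sum vectors $u=(3,-1,-1,-1)$ and $v=(-1,3,-2,0)$ have degrees $3,3$, and $u+v$, $u-v$ have degrees $4,5$. All of these are at most $h=5$, with $2\cdot 3>5$, yet $\rho(u)+\rho(v)=6>h$.

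The missing idea is that the four points must lie in $U_h$ \emph{simultaneously}, and this is measured by the translation height. Coordinatewise,
$\min(0,u_i,v_i,u_i+v_i)=\min(0,u_i)+\min(0,v_i)$,
so
$H(\{0,u,v,u+v\})=\rho(u)+\rho(v)\ge 2r>h$.
Hence no translate of the parallelogram fits in $U_h$.

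With this case added, the argument closes. By pigeonhole, two of the six differences agree up to sign. If their edges share a vertex, you get an active $2u$, which is impossible since $2\rho(u)>h$. If the edges are disjoint, you get the parallelogram just excluded. This is exactly the paper's proof.
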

\begin{proof}
Every active vector is primitive, since a proper divisor would
have degree at most $h/2<r$. Four points in a fiber have six
pair differences, so two agree up to sign.
If their edges share a vertex, an active multiple $2u$ results,
contrary to $2\rho(u)>h$.
If the edges are disjoint, the points form a parallelogram
of shape $\{0,u,v,u+v\}$. Coordinatewise,
\[
 \min(0,u_i,v_i,u_i+v_i)=\min(0,u_i)+\min(0,v_i),
\]
so its height is $\rho(u)+\rho(v)\ge2r>h$, also impossible.
Thus each nontrivial fiber is a pair or a triple.
The padding description counts the edges of each direction.
A pair contributes one edge and one unit of defect, whereas
a triple contributes three edges and two units.
This gives \eqref{eq:edge_count}.
\end{proof}

A \emph{triangle type} is a set of three unsigned directions
occurring as pair differences of a three-point fiber.
For a fixed type there are two orientations, represented up to
translation by $\{0,u,v\}$ and $\{0,-u,-v\}$.
Their heights satisfy
\begin{equation}\label{eq:triangle_height_sum}
 H+H'=\rho(u)+\rho(v)+\rho(u-v).
\end{equation}
To verify this, apply
$|x-y|+|y-z|+|z-x|=2(\max\{x,y,z\}-\min\{x,y,z\})$
in each coordinate. In particular, both orientations cannot
occur in $U_h$ if the right side exceeds $2h$.

Under the hypotheses of Lemma~\ref{lem:fiber_tools}, a triangle
orientation of height $H\le h$ gives exactly $M_{h-H}$ triple
fibers. Different paddings give different fibers: otherwise
two translated three-point sets coincide, and summing their
vertices gives $3(\gamma-\gamma')=0$.
This counting rule will be used both for small defects and
for the explicit upper-bound constructions.

The next formula is the rank-zero and rank-one cardinality
formula from the relation-lattice framework; see
\cite[Theorem 2]{OBryant}.

\begin{lemma}[Rank at most one]\label{lem:rankone}
If the active rank is zero, then $|hA|=M_h$. If the active rank is
one, there is an integer $r$ with $2\le r\le h$ such that
\[
 |hA|=M_h-M_{h-r}.
\]
\end{lemma}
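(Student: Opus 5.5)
Rank zero is immediate. If there are no active relations, then distinct points of $U_h$ never differ by an active vector. By the remark before Lemma~\ref{lem:diameter}, any two points in the same representation fiber differ by an active relation, so every fiber is a singleton. The map $U_h\to hA$ is then a bijection and $|hA|=|U_h|=M_h$.

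For rank one, all active relations lie on a line in $L_A$. Let $u$ be a primitive generator of $L_A\cap\mathbb{R}z$ for some active $z$, and put $r=\rho(u)$. Every active relation is $ku$ with $k\ne0$ and $|k|r\le h$, so $r\le h$. Also $r\ge2$, because a degree-one zero-sum relation has the form $e_i-e_j$, and $a_i=a_j$ is impossible for distinct elements. Each fiber is a subset of $U_h$ whose pairwise differences are all multiples of $u$. Hence a fiber lies on one line $\gamma+\Z u$, and its points are consecutive lattice points of that line inside the simplex. Convexity of $U_h$ in the affine hyperplane gives that the lattice points of $U_h$ on such a line form a contiguous run. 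Conversely, two points of $U_h$ on a common line differing by $ku$ have $\rho(ku)\le h$: both lie in $U_h$, so $|k|\rho(u)=\sum_i (ku)_i^+\le h$, using $t\ge0$ and $t+ku\ge0$. That difference is therefore active and maps to the same sum. So the fibers are exactly the maximal runs $U_h\cap(\gamma+\Z u)$.

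It remains to count the runs, since $|hA|=|U_h|-\#\{\text{pairs }(t,t+u)\text{ with both in }U_h\}$. Each run of length $\ell$ contributes $\ell-1$ consecutive edges. By the padding description recalled before Lemma~\ref{lem:diameter}, the pairs $\{t,t+u\}\subset U_h$ are exactly $\gamma+u^-,\gamma+u^+$ with $\gamma\in U_{h-r}$. There are $M_{h-r}$ of them, and distinct $\gamma$ give distinct pairs. Hence $|hA|=M_h-M_{h-r}$.

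The one point needing care is that no collision comes from a non-consecutive multiple $ku$ without being accounted for. This is handled by the contiguity observation: points $t,t+ku\in U_h$ force the intermediate points $t+ju$ into $U_h$, since each coordinate is a convex combination of two nonnegative integers. The intermediate points are also integral, so counting only the consecutive edges suffices.
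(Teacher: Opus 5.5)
Your proposal is correct and follows essentially the same route as the paper. In both, each fiber lies on a line parallel to the primitive generator, its points form a consecutive chain, and the padding description counts the $M_{h-r}$ consecutive edges, each of which costs one distinct sum. Your extra verifications, that the primitive generator is itself active and that each fiber is the whole run $U_h\cap(\gamma+\Z u)$, only make explicit what the paper's shorter argument leaves implicit.
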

\begin{proof}
Rank zero means that the representation map is injective.
For rank one, let $z$ be the primitive integer generator of the active
line. It belongs to $L_A$, and its degree $r$ is at most $h$.
Since the four elements are distinct, $r\ge2$.

Every fiber lies on a line parallel to $z$. Its integer points form
a consecutive chain: the parameters for which all four coordinates
are nonnegative form an interval. A fiber with $m$ representations
therefore has exactly $m-1$ consecutive edges, each with difference
$z$ up to sign. The displayed padding description counts all such
edges by $|U_{h-r}|=M_{h-r}$. Subtracting one per edge from the
$M_h$ representation points gives the result.
\end{proof}

\section{The structure of small defects}\label{sec:small}
This section explains why a small number of lost sum values leaves
very little freedom in the representation fibers. Throughout,
$L=L_A$ is the full rank-two homogeneous relation lattice, and
$r=\min\{\rho(z):z\in L\setminus\{0\}\}$.
Under the hypotheses below every active relation is primitive,
so different multiples do not have to be counted separately.

\begin{lemma}\label{lem:short}
If $h\ge7$ and $0<\Delta_h(A)\le19$, then $r\ge h-2$,
every active relation is primitive, and there are at most four
unsigned active directions.
\end{lemma}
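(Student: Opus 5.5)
The plan is to read the bound $r\ge h-2$ off the edges of a single direction. Primitivity is then immediate, and the bound on the number of directions comes from a planar packing argument in the $2$-dimensional space spanned by $L$, backed up by edge counting.

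\emph{Step 1 (lower bound on $r$).} Let $z\in L$ attain $r$, and suppose $r\le h$; otherwise there is nothing to prove. Build the graph on $U_h$ whose edges are the pairs $\gamma+z^-,\gamma+z^+$ with $\gamma\in U_{h-r}$. There are $M_{h-r}$ such edges.
\begin{itemize}
\item Each component of this graph lies in a single representation fiber.
\item As in the proof of Lemma~\ref{lem:rankone}, each component is a consecutive chain on a line parallel to $z$, so the graph is a forest.
\item Hence the number of fibers is at most $M_h-M_{h-r}$, that is, $\Delta_h(A)\ge M_{h-r}$.
\end{itemize}
Since $M_j$ is increasing and $M_3=20>19$, we get $h-r\le2$, i.e.\ $r\ge h-2$. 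Note that $\Delta_h(A)>0$ forces some active relation, so $r\le h$.

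\emph{Step 2 (primitivity).} For $h\ge7$ we have $2r\ge 2h-4>h$. If an active $z$ were $kw$ with $k\ge2$, then $w\in L$ and $\rho(w)=\rho(z)/k\le h/2<r$, which contradicts the minimality of $r$. The same inequality $2r>h$ shows that no proper multiple of a relation is active.

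\emph{Step 3 (at most four unsigned directions).} Set $N(x)=\sum_i x_i^+$. This is half the $\ell^1$ norm, so it is a norm on the plane $V=L\otimes\mathbb R$. Every active direction $z$ satisfies
\[
 r\le N(z)\le h\le r+2\le r(1+2/(h-2)).
\]
For two distinct unsigned directions $z,\pm w$, both $z-w$ and $z+w$ are nonzero lattice vectors, so
\[
 N(z\pm w)\ge r.
\]
Thus the active vectors form a centrally symmetric set of nearly minimal vectors of a planar lattice with pairwise separation at least $r$. The classical planar count says that at most $8$ nonzero lattice points have norm below $(1+\varepsilon)\lambda_1$ for small $\varepsilon$; the hexagonal-type unit ball of $N$ restricted to $V$ is what could allow the extreme case. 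This yields at most four unsigned directions. If the general packing statement turns out to be delicate, I will run a fallback argument instead.
\begin{itemize}
\item Five directions in the plane can be ordered by angle. Among them, some pair $u,v$ is adjacent, and $u,v$ together with $u-v$ or $u+v$ would have to be nearly collinear or short.
\item Alternatively, count defect directly. Each active direction contributes at least $M_0=1$ edge, and any direction with $\rho\le h-1$ contributes $4$, while one with $\rho= h-2$ contributes $10$.
\item Then combine the five edge forests. Lemma~\ref{lem:diameter} gives $D\le\rho(u)\rho(v)/[L:\Z u+\Z v]$, and the index must be at least $2$ whenever more than three directions share a small parallelogram. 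I would use this to force extra collisions whose total exceeds $19$.
\end{itemize}

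\emph{Main obstacle.} I expect the main difficulty to be Step 3, specifically excluding a fifth direction whose degree lies in $\{h-2,h-1,h\}$. The first thing to try is the angular ordering of the directions in $V$ combined with the triangle inequality for $N$: between adjacent directions $u,v$, the vector $u-v$ or $u+v$ should have norm strictly less than $r$ unless the unit ball of $N$ is flat along that arc. Since that unit ball has only finitely many edges in $V$, this should leave only a bounded case check.
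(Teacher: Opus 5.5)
Your Steps 1 and 2 are the paper's argument: the padded edges of one direction form chains, so $\Delta_h(A)\ge M_{h-r}$ and $M_3=20$ forces $r\ge h-2$; a proper divisor would have degree at most $h/2<r$. The gap is Step 3, where the bound on the number of directions is never proved.

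\begin{enumerate}
\item You invoke a ``classical planar count'' for ``small $\varepsilon$'' without saying how small $\varepsilon$ must be. You also do not check that your $\varepsilon=2/(h-2)$ is admissible.
\item The edge-counting fallback cannot work. Five directions of degree exactly $h$ contribute only $5M_0=5$ edges, far below $19$. No defect count can exclude them, so the exclusion must come from the lattice.
\item The index fallback also fails. Lemma~\ref{lem:diameter} only bounds $D$ from above, and the hypotheses here give no lower bound on $D$. In Lemma~\ref{lem:large_ranktwo} the mod-$2$ index argument works only because $D>d_h$ is assumed.
\item Pairwise separation $\rho(z\pm w)\ge r$ by itself is not what drives the count.
\end{enumerate}

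The missing idea is to compare active vectors modulo $3L$. This is the Minkowski argument behind the count you have in mind, and it is the paper's proof.

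Suppose two distinct vectors $u,v$ in $\{0\}\cup\{\text{active relations}\}$ were congruent modulo $3L$. Then $w=(u-v)/3$ is a nonzero element of $L$, and subadditivity of $\rho$ gives
\[
 \rho(w)\le\frac{\rho(u)+\rho(v)}{3}\le\frac{2h}{3}<h-2\le r .
\]
The strict inequality $2h/3<h-2$ holds exactly when $h\ge7$. This contradicts the minimality of $r$.

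Since $L$ has rank two, $|L/3L|=9$. Hence there are at most eight nonzero active vectors, that is, at most four unsigned directions. In your normalization, this proves the count for all $\varepsilon<1/2$, and $2/(h-2)\le 2/5$. With this paragraph in place of Step 3, your proof coincides with the paper's.
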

\begin{proof}
For a primitive relation of degree $r$, the padded edges
$\gamma+z^-\leftrightarrow\gamma+z^+$ form disjoint chains in the
representation fibers. Even if a fiber contains more than one
chain, it loses at least as many values as there are chain edges.
Consequently
\[
 \Delta_h(A)\ge M_{h-r}.
\]
Since $M_3=20$, we have $r\ge h-2$.
A proper divisor of an active relation would have degree at most
$h/2<r$, so every active relation is primitive.

The active relations, together with zero, inject into $L/3L$.
Otherwise distinct vectors $u,v$ in this set would satisfy
\[
 0\ne (u-v)/3\in L,\qquad
 \rho((u-v)/3)\le 2h/3<h-2\le r.
\]
This contradicts the definition of $r$.
The group $L/3L$ has nine elements because $L$ has rank two.
There are therefore at most eight nonzero active vectors, or four
unsigned directions.
\end{proof}

\begin{lemma}\label{lem:fibers}
Under the hypotheses of Lemma~\ref{lem:short}, every representation
fiber has at most three points. There is at most one unsigned
active direction of degree $h-2$.
\end{lemma}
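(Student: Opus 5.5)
The first claim should follow almost directly from Lemma~\ref{lem:fiber_tools}. Lemma~\ref{lem:short} gives $r\ge h-2$ and at most four unsigned active directions, all of them primitive. The hypothesis $2r>h$ of Lemma~\ref{lem:fiber_tools} reads $2(h-2)>h$, that is, $h>4$, which holds because $h\ge7$. So Lemma~\ref{lem:fiber_tools} applies verbatim and every representation fiber has at most three points. The argument there only needs that no proper multiple is active and that parallelograms $\{0,u,v,u+v\}$ have height $\rho(u)+\rho(v)\ge 2r>h$.

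For the second claim I would argue by contradiction. Suppose $u$ and $v$ are two distinct unsigned active directions, both of degree $h-2$. Each contributes $M_{h-\rho}=M_2=10$ collision edges, so $E\ge 20$. Since $\Delta_h(A)=E-T\le 19$ by \eqref{eq:edge_count}, there must be at least one triple fiber. The plan is to show that triple fibers are too scarce to close the gap.

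A triple fiber is a triangle type $\{x,y,x-y\}$ whose three directions are all active. By \eqref{eq:triangle_height_sum}, the two orientations have heights summing to $\rho(x)+\rho(y)+\rho(x-y)\ge 3(h-2)$. This exceeds $2h$ once $h>6$, so at most one orientation of each type occurs. Its height satisfies $H\ge \max\rho\ge h-2$, because the height of $\{0,x,y\}$ dominates each pairwise degree. Hence a type contributes at most $M_{h-H}\le M_2=10$ triples. The key step is to bound $T$ by a quantity smaller than $E-19$.

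I would split the bound on $T$ into cases. With four directions there are at most $\binom{4}{3}=4$ triangle types, but a triangle needs one direction to be the sum or difference of the other two. I would use the injection into $L/3L$ to limit how many such additive triples exist among at most four directions. The direct estimate uses both degree-$(h-2)$ directions $u,v$. Every other active direction has degree at most $h$ and contributes $M_{h-\rho}\ge 1$ edges. A triangle containing an edge of direction $w$ with $\rho(w)=h$ gives $H\ge h$, hence at most one triple. So only triangles built from directions of degree $h-2$ or $h-1$ can yield many triples.

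In the main case, the triangle $\{0,u,v\}$ with $u-v$ also active, I expect to compute the height exactly. Coordinatewise, $H=-\sum\min(0,u_i,v_i)$. Showing $H\ge h$ would give $T\le$ the number of such triangles, each contributing at most $M_0=1$, and so $E-T\ge 20+(\text{edges of }u-v)-O(1)>19$. The main obstacle is exactly this height estimate. Two distinct primitive zero-sum vectors of degree $h-2$ with $\rho(u-v)\le h$ must have combined negative parts of mass at least $h$, and at least $h-1$ to get a real saving. I would attack it with the product bound Lemma~\ref{lem:diameter} and the mod-$3$ argument: if $\min$-overlap were large, then $u-v$ would have small degree, at most $2(h-2)-2\cdot\text{overlap}$, contradicting $r\ge h-2$. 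Using $\rho(u-v)\ge h-2$ forces the overlap of the negative parts to be at most $(h-2)/2$. Hence $H\ge \tfrac32(h-2)\ge h$ for $h\ge 6$. Then each triple loses at most one unit, and the count gives $\Delta_h(A)\ge 20-(\text{few})$. I would finish by carefully tallying the remaining directions' edges, at least one per extra active direction, against these few triples to reach $\Delta_h(A)\ge 20$, a contradiction.
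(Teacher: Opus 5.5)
Your first claim is fine and matches the paper: $2r\ge2(h-2)>h$, so Lemma~\ref{lem:fiber_tools} applies verbatim. For the second claim there is a genuine gap. Your decisive estimate $H\ge\tfrac32(h-2)$ for the triangle $\{0,u,v\}$ is false. Put $o_\pm=\sum_i\min(u_i^\pm,v_i^\pm)$. Then $H=\sum_i\max(u_i^-,v_i^-)=2(h-2)-o_-$, and the opposite orientation has $H'=2(h-2)-o_+$. So \eqref{eq:triangle_height_sum} gives exactly $\rho(u-v)=2(h-2)-o_--o_+$. Hence $\rho(u-v)\ge h-2$ bounds only the sum $o_-+o_+$ by $h-2$, not $o_-$ by $(h-2)/2$. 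Your inequality $\rho(u-v)\le2(h-2)-2o_-$ has a spurious factor two; its general form $\rho(u-v)\le\rho(u)+\rho(v)-2o_-$ already fails for $u=(1,0,-1,0)$, $v=(0,1,-1,0)$. A concrete failure of the height estimate itself is the even-order defect-$29$ set \eqref{eq:even_29} in Lemma~\ref{lem:special_three}. There $r=h-2$, the directions $u,v,u-v$ all have degree $h-2$, and the triangle has height exactly $h<\tfrac32(h-2)$. Without that estimate, nothing in your plan rules out a triangle of height $h-1$, which would give four triples. Your closing tally of edges against triples is also never actually carried out.

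The missing idea makes heights irrelevant. In a fiber with at most three points, the edges whose differences are $\pm u$ or $\pm v$ cannot contain a cycle. The only possible cycle is a triangle, and a triangle uses three distinct directions. So these edges form a forest, and a fiber with $m$ points carries at most $m-1$ of them, which is exactly that fiber's contribution to the defect. Summing over fibers gives $\Delta_h(A)\ge 2M_2=20$, contradicting $\Delta_h(A)\le19$. In your own notation: every triple fiber contains an edge of some third direction, and distinct fibers have disjoint edge sets. Hence $T\le E-20$, so $\Delta_h(A)=E-T\ge20$.
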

\begin{proof}
The fiber bound follows from Lemma~\ref{lem:fiber_tools},
since $2r\ge2(h-2)>h$.

Suppose two directions have degree $h-2$.
They contribute ten padded edges each. The graph formed by these
two directions is a forest in each fiber: a fiber has at most
three points, and a triangle uses three different directions.
The twenty edges therefore force defect at least twenty, a
contradiction.
\end{proof}

\begin{lemma}\label{lem:triangle}
Under the hypotheses of Lemma~\ref{lem:short}, at most one triangle
type and one orientation occur in degree $h$.
\end{lemma}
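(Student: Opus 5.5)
The plan is to combine the height identity \eqref{eq:triangle_height_sum} with the degree window from Lemma~\ref{lem:short}. That lemma gives every active direction a degree in $\{h-2,h-1,h\}$, shows all active relations are primitive, and bounds the number of unsigned active directions by four. By Lemma~\ref{lem:fibers}, every nontrivial fiber is a pair or a triple, and at most one direction has degree $h-2$. Moreover, Lemma~\ref{lem:fiber_tools} and the counting rule after it convert any triangle orientation of height $H\le h$ into exactly $M_{h-H}$ triple fibers, with $\Delta_h(A)=E-T$.

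\emph{Orientations.} Fix a triangle type $\{u,v,u-v\}$. By \eqref{eq:triangle_height_sum},
\[
 H+H'=\rho(u)+\rho(v)+\rho(u-v).
\]
At most one of the three degrees equals $h-2$ and the others are at least $h-1$, so the right side is at least $3h-4$. This exceeds $2h$ because $h\ge7$. Hence $H$ and $H'$ cannot both be at most $h$, and only one orientation of a given type occurs in $U_h$. This step is routine.

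\emph{Two distinct types.} Each type uses three unsigned directions, and there are at most four. So two different types share at least two directions, say $u$ and $v$. The third direction of each type is $\pm(u-v)$ or $\pm(u+v)$ up to sign, so two different types force the four directions $u$, $v$, $u-v$, $u+v$. I would try to exclude this configuration in two ways.

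First, I would use the $L/3L$ injection from the proof of Lemma~\ref{lem:short}. These four directions and their negatives are already eight distinct classes, which is consistent, so this gives no contradiction by itself.

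Second, and this is the route I would actually pursue, I would argue coordinatewise. Since $(u+v)+(u-v)=2u$, and every degree of $u\pm v$ is at most $h$, each coordinate satisfies $|2u_i|\le|u_i+v_i|+|u_i-v_i|$. Combining this with the zero-sum condition gives $\rho(u+v)+\rho(u-v)\ge 2\rho(u)$, and similarly with $v$. I then need realizability of both oriented triangles, $\{0,\pm u,\pm v\}$ with height at most $h$. The height of $\{0,u,v\}$ is $-\sum_i\min(0,u_i,v_i)$. I would compare it with $\rho(u+v)=\sum_i\max(0,u_i+v_i)$ and $\rho(u-v)$ to show that one of the two required heights exceeds $h$ whenever all degrees are at least $h-2$ and $h\ge7$.

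If this coordinate argument proves stubborn, the fallback is a counting argument. Each realized triangle orientation of height $H$ contributes $M_{h-H}\ge1$ triples. Each triple consumes one edge from each of its three directions, and the two types share the $u$- and $v$-edges, with no edge lying in two fibers. Using $E=\sum M_{h-\rho}$ over four directions, I would then show that $E-T$ falls outside the range $0<\Delta_h(A)\le19$ permitted by the hypotheses. This needs a short case split on which direction, if any, has degree $h-2$.

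\emph{Main obstacle.} I expect the main difficulty to be this two-type exclusion, specifically the height inequality for the two oriented triangles sharing $u$ and $v$. I would first try to prove $H(\{0,u,v\})+H(\{0,u,-v\})\ge\rho(u)+\rho(u+v)+\rho(u-v)$ directly, coordinate by coordinate, and then use the degree window to push the sum above $2h$.
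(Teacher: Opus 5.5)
\textbf{The two-type exclusion has a genuine gap.} Your orientation step is correct and is essentially the paper's argument. So is your reduction of two types to the four directions $u,v,u+v,u-v$. The problem is the key inequality you plan to prove,
\[
 H(\{0,u,v\})+H(\{0,u,-v\})\ge\rho(u)+\rho(u+v)+\rho(u-v).
\]
It is false, and in fact its reverse always holds. In coordinate $i$ the left side contributes $|v_i|$ if $u_i\ge0$, and $|u_i|+\max(|u_i|,|v_i|)$ if $u_i<0$. The right side contributes $\tfrac12|u_i|+\max(|u_i|,|v_i|)$, since $|u_i+v_i|+|u_i-v_i|=2\max(|u_i|,|v_i|)$. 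Summing, left minus right equals $-\sum_{u_i>0}(u_i-|v_i|)_+\le0$. For instance, $u=(1,-1,0,0)$ and $v=(0,0,1,-1)$ give $2+2=4<1+2+2=5$.

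Your fallback cannot replace this step, because counting does not see the forbidden configuration. For example, four directions of degree $h-1$ with one triple of each type give $E-T=16-2=14$, well inside $0<\Delta_h(A)\le19$. Some geometric height bound is unavoidable.

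\textbf{The bound that does hold, and suffices.} The same coordinatewise computation gives, for any zero-sum $w,x$,
\[
 H(\{0,w,x\})+H(\{0,w,-x\})\ge\sum_i|x_i|+\sum_{w_i<0}|w_i|=\rho(w)+2\rho(x).
\]
This is exactly the paper's observation that the savings from deleting two adjacent corners of $\{0,u,v,u+v\}$ total at most $\rho(u)$ or $\rho(v)$.

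Choosing one orientation from each type gives, up to translation, a pair $\{0,w,x\}$, $\{0,w,-x\}$ with $(w,x)$ equal to $(\pm u,\pm v)$ or $(\pm v,\pm u)$. So the two heights sum to at least $\min\{\rho(u)+2\rho(v),\,2\rho(u)+\rho(v)\}$. By Lemma~\ref{lem:fibers}, at most one of $\rho(u),\rho(v)$ equals $h-2$, so this minimum is at least $3h-5>2h$. Hence the two triangles cannot both have height at most $h$. The paper phrases the same fact as $2R\le3h$ against $R=\rho(u)+\rho(v)\ge2h-3$.

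Your target overshoots because $\rho(u+v)+\rho(u-v)=\sum_i\max(|u_i|,|v_i|)\ge2\rho(v)$. You replaced the correct quantity $2\rho(v)$ by something that is generally strictly larger.
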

\begin{proof}
By \eqref{eq:triangle_height_sum}, the heights of the two
orientations of one type satisfy
\[
 H+H'=\rho(u)+\rho(v)+\rho(u-v)\ge3(h-2)>2h.
\]
Thus both orientations cannot occur.

Two different triangle types, if present, share exactly two
directions: each uses three, and there are at most four in all.
Represent the shared directions by independent vectors $u,v$.
The other directions are $u+v$ and $u-v$, up to sign.
The four triangles obtained by deleting one corner of
$\{0,u,v,u+v\}$ represent the two orientations of these two types.
Put $R=\rho(u)+\rho(v)$. Deleting the indicated corner decreases
the parallelogram height by
\begin{align*}
 s_0&=\sum_i\min((u_i)_+,(v_i)_+),&
 s_u&=\sum_i\min((-u_i)_+,(v_i)_+),\\
 s_v&=\sum_i\min((u_i)_+,(-v_i)_+),&
 s_{u+v}&=\sum_i\min((-u_i)_+,(-v_i)_+).
\end{align*}
Here $x_+=\max(x,0)$.
These formulas follow by taking the coordinatewise minimum before
and after deleting the corner. The remaining triangle has height
$R-s_j$.

Opposite deleted corners give the same type. One triangle from
each type therefore corresponds to adjacent deleted corners.
Their two savings sum to at most $\rho(u)$ or $\rho(v)$, hence
at most $h$. For example,
$s_0+s_u\le\sum_i(v_i)_+=\rho(v)$; the other cases follow by
exchanging $u,v$ or changing their signs.
If both triangles fit in degree $h$, each saving is at least
$R-h$, giving $2R\le3h$.
But at most one of the shared directions has degree $h-2$, by
Lemma~\ref{lem:fibers}. Therefore
\[
 R\ge(h-2)+(h-1)=2h-3,\qquad 2R\ge4h-6>3h
\]
for $h\ge7$, a contradiction.
\end{proof}

\begin{proposition}[Small-defect counting]\label{prop:small}
Let $h\ge7$ and $0\le\Delta_h(A)\le19$.
All active directions have degrees $h-2,h-1,h$, with respective
counts $\varepsilon,b,c$, where
\[
 \varepsilon\in\{0,1\},\qquad \varepsilon+b+c\le4.
\]
Every nontrivial representation fiber is a pair or a triple.
If $T$ is the number of triple fibers, then
\begin{equation}\label{eq:small}
 \Delta_h(A)=10\varepsilon+4b+c-T,\qquad T\in\{0,1,4\}.
\end{equation}
If $T>0$, at least three directions occur; if $T=4$, at least
three of the directions have degree at most $h-1$.
\end{proposition}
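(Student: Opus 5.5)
The proof splits according to whether the defect vanishes. If $\Delta_h(A)=0$, Lemma~\ref{lem:rankone} (or direct injectivity) shows there are no active relations, so $\varepsilon=b=c=T=0$ and \eqref{eq:small} holds trivially. Assume $0<\Delta_h(A)\le19$.

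\textbf{Directions and edge count.} Lemma~\ref{lem:short} gives $r\ge h-2$, primitivity of all active relations, and at most four unsigned active directions. Every active degree therefore lies in $\{h-2,h-1,h\}$, and $\varepsilon+b+c\le4$. By Lemma~\ref{lem:fibers}, at most one direction has degree $h-2$, so $\varepsilon\in\{0,1\}$, and every nontrivial fiber is a pair or a triple. Since $2r\ge2(h-2)>h$ for $h\ge7$, Lemma~\ref{lem:fiber_tools} applies and gives $\Delta_h(A)=E-T$ with $E=\sum M_{h-\rho(z)}$. Using $M_2=10$, $M_1=4$, $M_0=1$, this becomes $E=10\varepsilon+4b+c$, which is \eqref{eq:small}.

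\textbf{The value of $T$.} By Lemma~\ref{lem:triangle}, at most one triangle type with one orientation occurs. If none occurs, $T=0$. Otherwise the counting rule after \eqref{eq:triangle_height_sum} gives $T=M_{h-H}$, where $H\le h$ is the height of that orientation. A triangle uses three distinct directions, which gives the claim that $T>0$ forces at least three directions. The main work is bounding $H$ from below.

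\textbf{Lower bound on $H$.} Write the triangle as $\{0,u,v\}$. Its height satisfies $H\ge\max(\rho(u),\rho(v),\rho(u-v))\ge h-2$, since the translated set must contain each edge's positive and negative parts. This leaves $H\in\{h-2,h-1,h\}$ and $T\in\{10,4,1\}$. To rule out $T=10$, I would use the bound $H\ge\max$ of the three edge degrees. If $H=h-2$, all three edges would have degree $h-2$, contradicting $\varepsilon\le1$. Hence $H\ge h-1$ and $T\in\{1,4\}$.

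\textbf{The case $T=4$.} Here $H=h-1$, so each of the three edge degrees is at most $H=h-1$. These are three directions of degree at most $h-1$, which is the final claim.

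The step I expect to need care is the inequality $H\ge\rho(\text{edge})$. It follows because translating $\{0,u,v\}$ into the orthant by the coordinatewise minimum yields points $p,q$ with $p-q=u$, so that $H=|p|\ge|u^+|=\rho(u)$. With that in hand, everything else is bookkeeping on $M_j$ values.
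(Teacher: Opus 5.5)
Your proposal is correct and follows the paper's proof essentially step for step: you settle the zero-defect case separately, use Lemmas~\ref{lem:short}, \ref{lem:fibers}, \ref{lem:fiber_tools} and \ref{lem:triangle} for the edge count $10\varepsilon+4b+c$ and the single triangle orientation, and get $H\ge h-1$ because at most one of the three edges has degree $h-2$. The only differences are that you spell out the inequality $H\ge\rho(\text{edge})$, which the paper uses implicitly, and that for zero defect your direct injectivity argument is the one that actually works, since Lemma~\ref{lem:rankone} alone does not cover active rank two.
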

\begin{proof}
Zero defect means that there are no active relations.
Otherwise the preceding lemmas apply. A triangle has height at
least $h-1$, since it uses three directions and at most one can
have smaller degree. Its height is thus $h-1$ or $h$ when it
occurs. The number of nonnegative common paddings is respectively
$M_1=4$ or $M_0=1$.

The padding rule following Lemma~\ref{lem:fiber_tools}, together
with Lemma~\ref{lem:triangle}, gives the possibilities for $T$.

The total number of collision edges is
$10\varepsilon+4b+c$. A pair fiber contributes one edge and one
unit of defect. A triple contributes three edges and two units.
Subtracting $T$ gives \eqref{eq:small}.
The last assertions follow from the three edges of a triangle
and from its height when $T=4$.
\end{proof}

These are necessary structural conditions. They do not assert
that every solution of \eqref{eq:small} is realizable, or determine
the least diameter at each defect.

\section{The diameter restriction at defect seventeen}\label{sec:rigidity}
We now prove the first assertion of Theorem~\ref{thm:rigidity}.
Assume $h\ge9$ and $\Delta_h(A)=17$.
Proposition~\ref{prop:small} leaves only one possible set of degrees.
The remaining task is to recover the set from the exponent
vectors of its triple fiber.

\begin{lemma}\label{lem:seventeen}
There are exactly three unsigned active relations, of degrees
$h-2,h-1,h-1$. There are fifteen pair fibers and one triple fiber.
\end{lemma}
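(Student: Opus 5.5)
We will solve the counting identity \eqref{eq:small} of Proposition~\ref{prop:small} with $\Delta_h(A)=17$ and then remove the spurious solutions. The identity reads
\[
 10\varepsilon+4b+c-T=17,\qquad \varepsilon\in\{0,1\},\quad \varepsilon+b+c\le4,\quad T\in\{0,1,4\}.
\]
We enumerate by $\varepsilon$.

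If $\varepsilon=0$, then $4b+c\le16$ because $b+c\le4$. So $17+T\le16$, which is impossible.

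If $\varepsilon=1$, we need $4b+c=7+T$ with $b+c\le3$.
\begin{itemize}
\item $T=0$: we need $4b+c=7$, and the solutions $(b,c)=(1,3)$ and $(0,7)$ violate $b+c\le3$.
\item $T=1$: we need $4b+c=8$. The only admissible solution is $(b,c)=(2,0)$, since $(1,4)$ violates the bound.
\item $T=4$: we need $4b+c=11$, and no solution has $b+c\le3$; the best is $4\cdot3=12\ne11$, with $(2,3)$ and $(2,3)$-type choices too large.
\end{itemize}
The unique survivor is $\varepsilon=1$, $b=2$, $c=0$, $T=1$. The side conditions of Proposition~\ref{prop:small} are satisfied: $T>0$ requires at least three directions, and here there are exactly three. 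The case $T=4$ needed three directions of degree at most $h-1$, but it is already excluded.

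This gives three unsigned active directions, of degrees $h-2$, $h-1$, $h-1$. By Lemma~\ref{lem:fiber_tools} the edge count is $E=M_2+2M_1=10+8=18$. With $T=1$ there is exactly one triple fiber, contributing three edges. The remaining $15$ edges lie in pair fibers, because every fiber has at most three points and a pair carries exactly one edge. So there are fifteen pair fibers. The defect check $E-T=17$ is consistent.

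The main point to be careful about is that the case split is genuinely exhaustive. This relies on the hypotheses of Lemma~\ref{lem:short} ($h\ge7$ holds since $h\ge9$, and $17\le19$) so that Proposition~\ref{prop:small} applies in full. It also relies on the bound $\varepsilon+b+c\le4$ being used together with $\varepsilon=1$ to force $b+c\le3$. No realizability question arises here, since the lemma only asserts necessary structure.
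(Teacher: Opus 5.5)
Your proof is correct and follows the paper's argument: you force $\varepsilon=1$ from $4b+c\le4(b+c)\le16$, solve $4b+c=7,8,11$ under $b+c\le3$ to obtain the unique solution $(b,c,T)=(2,0,1)$, and split the $18$ collision edges into the three of the triple fiber and fifteen pair fibers. The only blemish is the garbled justification in the case $T=4$; it is cleaner to say that the nonnegative solutions of $4b+c=11$ are $(0,11),(1,7),(2,3)$, and all of them have $b+c>3$.
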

\begin{proof}
If $\varepsilon=0$ in \eqref{eq:small}, the defect is at most
$4(b+c)\le16$. Thus $\varepsilon=1$, and
\[
 17=10+4b+c-T,\qquad b+c\le3,\qquad T\in\{0,1,4\}.
\]
For $T=0,1,4$ this requires respectively $4b+c=7,8,11$.
The only solution with $b,c\ge0$ and $b+c\le3$ is
$(b,c,T)=(2,0,1)$. The eighteen edges therefore comprise three
edges of the triple and fifteen edges of pair fibers.
\end{proof}

Let $P,Q,R\in U_h$ be the triple fiber, labeled so that
\[
 \rho(Q-P)=h-2,\qquad \rho(R-P)=\rho(R-Q)=h-1.
\]
Its two edges $u=Q-P$ and $v=R-P$ determine the whole lattice,
not just the relations visible at degree $h$.

\begin{lemma}\label{lem:basis}
The vectors $u,v$ form an integer basis of $L_A$.
\end{lemma}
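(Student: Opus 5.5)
The plan is to argue by contradiction, using only that every nonzero element of $L_A$ has degree at least $r\ge h-2$ (Lemma~\ref{lem:short}). The degree $\rho$ is a norm on the real plane $L_A\otimes\mathbb R$: it is half the $\ell^1$ norm restricted to zero-sum vectors. With the labeling fixed before the statement, the three points $0,u,v$ have pairwise distances
\[
\rho(u)=h-2,\qquad \rho(v)=\rho(v-u)=h-1.
\]
I will not use the product bound of Lemma~\ref{lem:diameter}, because it bounds the index only in terms of $D$, which is not yet controlled.

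\emph{Step 1 (an extra lattice point).} Suppose $[L_A:\Z u+\Z v]\ge2$. The lattice $L_A$ is rank two, and $u,v$ are independent since they are distinct edges of a triple fiber with different directions. Consider the closed parallelogram with vertices $0,u,v,u+v$. If its only points of $L_A$ were the four vertices, it would be a fundamental domain and $\Z u+\Z v=L_A$. This is the standard fact that an empty lattice parallelogram is unimodular, via Pick's theorem or a direct reduction. Hence there is $w\in L_A$, not a vertex, lying in one of the two closed triangles $\{0,u,v\}$ or $\{u,v,u+v\}$. The second triangle is the image of the first under $x\mapsto u+v-x$, which preserves $L_A$ and $\rho$, so we may assume $w$ lies in the triangle with vertices $P_0=0$, $P_1=u$, $P_2=v$.

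\emph{Step 2 (a short relation inside the triangle).} Write $w=\sum_i\lambda_iP_i$ with $\lambda_i\ge0$ and $\sum_i\lambda_i=1$, and put $d_{ij}=\rho(P_i-P_j)\le h-1$. Convexity of the norm gives
\[
\rho(w-P_i)\le\sum_j\lambda_j d_{ij}.
\]
Averaging with weights $\lambda_i$ yields
\[
\sum_i\lambda_i\,\rho(w-P_i)\le2\sum_{i<j}\lambda_i\lambda_j d_{ij}\le(h-1)\Bigl(1-\sum_i\lambda_i^2\Bigr)\le\tfrac23(h-1).
\]
Hence some $i$ satisfies $\rho(w-P_i)\le\tfrac23(h-1)$. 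Since $w$ is not a vertex, $w-P_i$ is a nonzero element of $L_A$.

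\emph{Step 3 (contradiction).} For $h\ge9$ we have $\tfrac23(h-1)<h-2$, because this inequality is equivalent to $h>4$. This contradicts $r\ge h-2$, so the index is one and $u,v$ form an integer basis of $L_A$.

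The only step needing care is Step 1, the claim that an empty lattice parallelogram spanned by $u,v$ is unimodular. The cleanest route is to apply a linear isomorphism $L_A\cong\Z^2$ and use Pick's theorem, or to note that a nonbasis admits a coset representative $\alpha u+\beta v$ with $\alpha,\beta\in[0,1)$ not both zero. Everything else is norm convexity.
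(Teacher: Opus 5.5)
Your proof is correct, but it reaches the contradiction by a different route from the paper.

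The paper also produces a nonvertex lattice point $w=\alpha u+\beta v$ of $\operatorname{conv}\{0,u,v\}$, but then argues qualitatively:
\begin{itemize}
\item primitivity of $u,v$ keeps $w$ off the two edges through $0$;
\item convexity along those edges gives $\rho(w)\le\alpha(h-2)+\beta(h-1)\le h-1$;
\item so $w$ is an active relation whose direction lies strictly inside the cone of $u$ and $v$. This contradicts the complete active list $\{\pm u,\pm v,\pm(v-u)\}$ from Lemma~\ref{lem:seventeen}.
\end{itemize}

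You instead use only the global bound $r\ge h-2$ of Lemma~\ref{lem:short}. You combine it with the quantitative fact that, in any normed plane, every point of a triangle lies within $\tfrac23$ of the longest side length from some vertex. This yields a nonzero relation $w-P_i$ of degree at most $\tfrac23(h-1)<h-2$.

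Each route buys something different. Yours does not need the exact count of active directions or the primitivity of $u,v$, since points on the radial edges are absorbed by the same estimate. It also proves more generally that any nondegenerate lattice triangle whose sides all have degree below $\tfrac32 r$ has two edge vectors forming a basis of $L_A$. The paper's route is a one-line convexity bound, but it depends on the finer classification already established at defect seventeen.

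One small point: your justification of the independence of $u,v$ is terse. It does follow, because both are primitive active vectors with $\rho(u)=h-2\ne h-1=\rho(v)$, so $v\ne\pm u$.
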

\begin{proof}
They are independent; otherwise the fiber would contain an
equally spaced triple and an active multiple of an active vector.
By Lemma~\ref{lem:seventeen}, the full active set is
$\{\pm u,\pm v,\pm(v-u)\}$.
If the subgroup generated by $u,v$ had index greater than one,
its half-open fundamental parallelogram would contain a nonzero
point
\[
 w=\alpha u+\beta v\in L_A,\qquad 0\le\alpha,\beta<1.
\]
Replace $w$ by $u+v-w$ if $\alpha+\beta>1$.
Denote its coefficients again by $\alpha,\beta$; they now
satisfy $\alpha,\beta\ge0$ and $\alpha+\beta\le1$.
We obtain a nonvertex lattice point of the triangle
$\operatorname{conv}\{0,u,v\}$.
It cannot lie on a radial edge, since $u,v$ are primitive.
Its direction is therefore strictly inside their positive cone.
Convexity of $\rho$ gives
\[
 \rho(w)\le\alpha\rho(u)+\beta\rho(v)
 =\alpha(h-2)+\beta(h-1)
 \le(h-1)(\alpha+\beta)\le h-1.
\]
None of the six listed active vectors has this direction, a
contradiction.
\end{proof}

For equal-degree nonnegative vectors,
\[
 \sum_i\min(P_i,Q_i)=h-\rho(P-Q).
\]
Thus the pairwise coordinate minima of $(P,Q)$, $(P,R)$ and
$(Q,R)$ have total degrees $2,1,1$.
The common minimum of all three vectors is zero.
Otherwise the triangle could be translated into degree at most
$h-1$ and would give at least four triple fibers in degree $h$,
contrary to Lemma~\ref{lem:seventeen}.
The supports of the three pairwise minima are therefore disjoint.

\subsection{The exponent assignments}
After permuting the four coordinates, there are two possible
base matrices, whose rows correspond to $P,Q,R$:
\begin{equation}\label{eq:bases}
 B_{\rm I}=
 \begin{pmatrix}2&1&0&0\\2&0&1&0\\0&1&1&0\end{pmatrix},
 \qquad
 B_{\rm II}=
 \begin{pmatrix}1&1&1&0\\1&1&0&1\\0&0&1&1\end{pmatrix}.
\end{equation}
In Type I, the degree-two minimum occupies one coordinate; in
Type II, it occupies two. The degree-one minima each occupy one
of the remaining coordinates.

In a shared coordinate, any exponent above the base value can
be added to only one of the two rows sharing that coordinate.
Adding to both would increase their prescribed minimum.
In Type I the fourth coordinate can occur in only one row:
all three pairwise minima have already used their full degree
in the first three coordinates.
This fourth coordinate must be positive. If it were zero in
every row, two independent homogeneous relations would be
supported on three distinct set elements. The homogeneous
relation space on three distinct elements has dimension one,
so this is impossible.

Put $n=h-3$, so $n\ge6$.
The extra row sums beyond either base matrix are $n,n,n+1$.
There are four extra-exponent positions, and each row must
receive at least one. Thus one row receives two, and the others
receive one each. In the row receiving two, assign $t$ to the
first position in coordinate order and the remaining row sum
to the second. All other extra entries are fixed by their row
sums.

The choices can be recorded without listing the full matrix each
time. A bit $0$ selects the first row of a pair and a bit $1$
selects the second. The pairs, in coordinate order, are
$PQ,PR,QR$ for Type I and $PQ,PQ,PR,QR$ for Type II.
Type I also records the row receiving the exclusive fourth
coordinate.
Requiring all three rows to receive an extra position gives the
following complete list:
\[
\begin{array}{ccl}
 \text{Type I}&P:&010,\ 101,\ 110,\ 111,\\
              &Q:&001,\ 010,\ 011,\ 101,\\
              &R:&000,\ 010,\ 100,\ 101;\\[2pt]
 \text{Type II}&&0010,\ 0101,\ 0110,\ 0111,\
                         1001,\ 1010,\ 1011,\ 1101 .
\end{array}
\]
There are twelve assignments of Type I and eight of Type II.
A zero extra entry may be assigned to either allowed row;
duplicate descriptions at endpoints do not affect coverage.

Exchanging $P,Q$ reduces Type I to six representatives.
For Type II, also exchange its first two coordinates.
The resulting three representatives, together with those of
Type I, are listed in Table~\ref{tab:forms}.
For example, the exchange of $P,Q$ sends a Type I code
$(e;b_0b_1b_2)$ to
$(\tau(e);(1-b_0)b_2b_1)$, where $\tau$ exchanges $P,Q$ and
fixes $R$. In Type II the two operations send the code to
$b_1b_0b_2b_3$ and
$(1-b_0)(1-b_1)b_3b_2$.
Applying these operations to the table gives exactly the twenty
assignments above.

\begin{table}[H]
\centering
\caption{Representative exponent assignments.
The exclusive row is used only in Type I.
The parameter $t$ is an integer; its bounds incorporate the
positive fourth coordinate in Type I.}\label{tab:forms}
\begin{tabular}{ccccc}
\toprule
Label&Type&Exclusive row&Code&Range of $t$\\
\midrule
A&I&P&010&$0\le t\le n-1$\\
B&I&P&101&$0\le t\le n-1$\\
C&I&P&110&$0\le t\le n$\\
D&I&P&111&$0\le t\le n+1$\\
E&I&R&000&$0\le t\le n$\\
F&I&R&010&$0\le t\le n$\\
G&II&--&0010&$0\le t\le n$\\
H&II&--&0101&$0\le t\le n$\\
I&II&--&0111&$0\le t\le n+1$\\
\bottomrule
\end{tabular}
\end{table}

\subsection{Recovering the set}
For each assignment, delete coordinate zero from $u,v$ and form
\[
 (q_1,q_2,q_3)=u'\times v',\qquad q_0=0.
\]
Lemma~\ref{lem:basis} implies that this normal is primitive.
Indeed, a basis of the kernel of a primitive integer linear
form, together with a vector on which the form is one, is a
basis of $\Z^3$. Its determinant is one up to sign.
Consequently the $q_i$ are the elements of the normalized set up
to a common translation and sign, even though the coordinates
have been permuted. In particular,
\begin{equation}\label{eq:normaldiam}
 \operatorname{ndiam}(A)=\max_{0\le i\le3}q_i-\min_{0\le i\le3}q_i.
\end{equation}
The index is one, so the primitive normal gives the
normalized diameter directly, with no common factor to remove.

To see how an assignment gives its normal, consider case A.
The row choices in Table~\ref{tab:forms} give
\[
 P=(2+t,1,0,n-t),\quad
 Q=(2,0,n+1,0),\quad
 R=(0,n+2,1,0).
\]
Subtracting $P$ from $Q$ and $R$ and taking the projected
cross product gives row A below. The other assignments use
the same rule.
Writing $d=n(n+2)$, the nine normals are:
\begin{align*}
 {\rm A}:\quad &(n(t-n),\ (n+2)(t-n),\ -d-2),\\
 {\rm B}:\quad &((n+1)(n-t),\ t-n,\ -n-2-(n+1)t),\\
 {\rm C}:\quad &(n(t-n),\ -d,\ (n+1)t-d-2),\\
 {\rm D}:\quad &(n(n+1-t),\ -n(t+1),\ -n-2),\\
 {\rm E}:\quad &(d+1,\ (n+1)(n+1-t),\ n(n-t)-1),\\
 {\rm F}:\quad &((n+1)(n+1-t),\ n+1-t,\ -1-(n+1)t),\\
 {\rm G}:\quad &(-d-2,\ nt-n(n+1)-1,\ (n+2)(t-n)-1),\\
 {\rm H}:\quad &((n+1)t-d-2,\ -d-1,\ (n+1)(t-n)-1),\\
 {\rm I}:\quad &(-n-2,\ nt-d-1,\ nt-1).
\end{align*}

Each normal has a pair of entries separated by a constant in
$t$. For A--I, respectively, these separations are
\[
 q_0-q_3=d+2,\quad q_1-q_3=d+2,\quad q_0-q_2=d,\quad
 q_1-q_2=d,\quad q_1-q_0=d+1,
\]
\[
 q_1-q_3=d+2,\quad q_0-q_1=d+2,\quad
 q_0-q_2=d+1,\quad q_3-q_2=d.
\]
Thus every range in \eqref{eq:normaldiam} is at least $d$.
At the two endpoints of the corresponding $t$-interval in
Table~\ref{tab:forms}, the ranges minus $d$ are respectively
\[
 (2,2),\ (2,2),\ (2,0),\ (2,0),\ (1,2),\
 (2,2),\ (2,2),\ (2,1),\ (1,1).
\]
For example, row A, including $q_0=0$, gives
\[
 \begin{array}{c|c}
 t&(q_0,q_1,q_2,q_3)\\ \hline
 0&(0,-n^2,-d,-d-2)\\
 n-1&(0,-n,-n-2,-d-2).
 \end{array}
\]
At both endpoints the maximum is $0$ and the minimum is
$-d-2$, so the range is $d+2$. This gives the first pair
$(2,2)$ above. The remaining pairs follow by the same
substitution and comparison, using $n\ge6$.
The range of finitely many affine functions of $t$ is convex:
it equals $\max_{i,j}(q_i-q_j)$.
It is therefore no larger than the larger endpoint range.
Every range is at most $d+2$.
Since the range is integral, \eqref{eq:normaldiam} proves
the three-value restriction in Theorem~\ref{thm:rigidity}.
The next section attains its lower endpoint.

\section{A construction with constant defect seventeen}\label{sec:family}
It remains to attain the lower endpoint in Theorem~\ref{thm:rigidity}.
We give the construction together with its complete relation list.

\begin{proposition}\label{thm:family}
For every integer $h\ge9$, the set $A_h$ in \eqref{eq:uniform}
is primitive and satisfies
\[
 \operatorname{diam}(A_h)=(h-3)(h-1),\qquad |hA_h|=M_h-17.
\]
\end{proposition}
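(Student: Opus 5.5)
The plan is a direct computation: determine the whole relation lattice $L_{A_h}$, list its active relations at degree $h$, and then count the defect with Lemma~\ref{lem:fiber_tools}. Primitivity and the diameter come first and are routine. In the case $h\not\equiv1\pmod3$ we have $\gcd(h-4,2h-5)=\gcd(h-4,3)$. This gcd is $1$ unless $h\equiv1\pmod3$, which is excluded. In the other case, $\gcd(h-3,2h-3)=\gcd(h-3,3)$, and we additionally check the factor $3$ against $d_h=(h-3)(h-1)$. With $h\equiv1\pmod 3$, $h-1\equiv0$ and $h-3\equiv1$, so $3\mid d_h$ while $3\nmid h-3$. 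Hence the gcd of all differences is $1$. The diameter is $d_h$ by inspection.

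Next I exhibit two explicit relations. Put $n=h-3$. In the first case the elements are $0,\ n-1,\ 2n+1,\ n(n+2)$, and one checks $D=(n+1)a+b$. This gives the zero-sum relation $v=(n+1,-(n+1),-1,1)$ of degree $n+2=h-1$. I then search for a second relation $u$ of degree $n+1=h-2$ by solving $z_1a+z_2b+z_3D=0$ with small $z_3$. The shape of Type~I/II in Section~\ref{sec:rigidity} suggests which support pattern to try. I will verify that $v-u$ (or $u+v$) has degree $h-1$, and treat the case $h\equiv1\pmod3$ in the same way. To see that $u,v$ form a basis of $L_{A_h}$, I compute the projected cross product $u'\times v'$ and check that it equals $\pm(a,b,D)$ exactly. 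By the index argument in the proof of Lemma~\ref{lem:diameter}, this gives $[L:\Z u+\Z v]=1$.

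The main obstacle is proving that there are no other active relations. Every $w\in L$ has the form $w=\alpha u+\beta v$ with $\alpha,\beta\in\Z$. I will bound $\rho(w)$ from below using a few coordinates, or subset sums via \eqref{eq:subsetmass}. Since the entries of $u$ and $v$ are of size about $n$ in distinct coordinate patterns, each coordinate is an explicit linear form in $\alpha,\beta$. A short case split on the signs of $\alpha,\beta$ will show $\rho(w)\ge(|\alpha|+|\beta|-1)\,n$ or similar, so $\rho(w)>h$ unless $(\alpha,\beta)$ lies in a tiny box such as $|\alpha|,|\beta|\le1$. Checking that box by hand leaves exactly $\pm u,\pm v,\pm(u-v)$, of degrees $h-2,h-1,h-1$. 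In particular the least degree is $r=h-2$, so $2r>h$ for $h\ge9$, and the hypotheses of Lemma~\ref{lem:fiber_tools} hold. If the inequality is too lossy for small $h$, I would fall back on Lemma~\ref{lem:diameter}: two independent active relations of degrees $\rho_1,\rho_2$ satisfy $d_h\le\rho_1\rho_2$, which excludes pairs containing a relation of degree much below $h-2$.

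Finally I count. By \eqref{eq:edge_count}, $E=M_2+M_1+M_1=10+4+4=18$. For the triangle $\{0,u,v\}$ I compute the two orientation heights. Their sum is $3h-4$ by \eqref{eq:triangle_height_sum}, and I expect one height to equal $h$ and the other $2h-4>h$. Then the counting rule after Lemma~\ref{lem:fiber_tools} gives $T=M_0=1$, so $|hA_h|=M_h-(18-1)=M_h-17$.
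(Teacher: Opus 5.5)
Your plan is correct and is essentially the paper's proof: certify an explicit basis by the projected cross product, bound coefficients until only $\pm u,\pm v,\pm(u-v)$ remain, with degrees $h-2,h-1,h-1$, count $M_2+2M_1=18$ edges, and find a unique triangle orientation of height $h$ (the opposite one has height $2h-4$), so $T=1$ and the defect is $17$.

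One fix is needed. Your guessed bound $\rho(\alpha u+\beta v)\ge(|\alpha|+|\beta|-1)n$ is false: with $u=(1,-n,n,-1)$ and your $v$, the choice $(\alpha,\beta)=(3,-2)$ gives degree $3n+2<4n$. Argue as the paper does instead. Coordinates one and two of $\alpha u+\beta v$ sum to $-(n+2)\beta$, so \eqref{eq:subsetmass} forces $|\beta|\le1$. Coordinate two, $n\alpha-\beta$, then forces $|\alpha|\le1$.
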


We use two complementary families.
Let
\[
 v=h-1,\qquad u=v-2,\qquad d=uv.
\]
For $v\ge8$, consider
\begin{align}
 A(v,s)&=\{0,u-s,u+v-s,uv\},
 &&s\in\{1,2\},\quad\gcd(v,s+2)=1,\label{eq:familyA}\\
 B(v)&=\{0,v-2,2v-1,v(v-2)\},
 &&\gcd(v-2,3)=1.\label{eq:familyB}
\end{align}
They are primitive: the gcd in \eqref{eq:familyA} is
$\gcd(u-s,v)=\gcd(v,s+2)$, while the gcd of the middle two entries
in \eqref{eq:familyB} is $\gcd(v-2,3)$.
All four entries are distinct and in increasing order.
If $3\nmid v$, the first family with $s=1$ is available; if
$3\mid v$, the second is available. These choices are exactly
\eqref{eq:uniform}.

\subsection{Complete short-relation lists}
We will use the following elementary lattice fact. If two projected
relations in $\Z^3$ have cross product equal to the primitive
normal vector $(a,b,d)$ up to sign, they form an integer basis of
its kernel. To see this, choose an integer vector $t$ with
$(a,b,d)\cdot t=1$. Adjoining $t$ gives a determinant-one basis of
$\Z^3$, and the coefficient of $t$ vanishes for every vector in
the kernel. Coordinate zero is recovered from homogeneity.

\begin{lemma}\label{lem:Alist}
For the set $A(v,s)$, all nonzero relations of degree at
most $h=v+1$ are
\[
 \pm R,\quad \pm S,\quad \pm(R+S),
\]
where
\[
 R=(1,-u,u,-1),\qquad S=(1-v,v-s,s,-1).
\]
Their degrees are $h-2,h-1,h-1$, respectively.
\end{lemma}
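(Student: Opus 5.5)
The plan is to write every relation in the basis $\{R,S\}$ and then use two linear combinations of coordinates to pin down the coefficients. First I check directly that $R,S\in L_{A}$ for the ordered set $(0,a,b,D)=(0,u-s,u+v-s,uv)$. Both have coordinate sum zero. For the second relation,
\[
 -u(u-s)+u(u+v-s)-uv=0,\qquad (v-s)(u-s)+s(u+v-s)-uv=0,
\]
the second by expanding. The degrees are read off from the positive parts. $R$ has positive mass $1+u=v-1=h-2$. $S$ has positive mass $(v-s)+s=v=h-1$. The sum is $R+S=(2-v,\,2-s,\,u+s,\,-2)$, and since $s\le2$ its positive mass is $(2-s)+(u+s)=u+2=v=h-1$.

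Next I show that $R,S$ form an integer basis of $L_A$, using the elementary lattice fact stated just before the lemma. The projected vectors are $R'=(-u,u,-1)$ and $S'=(v-s,s,-1)$. Their cross product is
\[
 R'\times S'=(-u+s,\;-(u+v-s),\;-uv)=-(a,b,D).
\]
This normal is primitive by the gcd computation after \eqref{eq:familyA}, namely $\gcd(v,s+2)=1$. Hence $L_A=\Z R+\Z S$. Consequently every relation, including any non-primitive one, has the form $w=xR+yS$ with $x,y\in\Z$:
\[
 w=\bigl(x+(1-v)y,\;-ux+(v-s)y,\;ux+sy,\;-(x+y)\bigr).
\]

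The key step is to bound $x$ and $y$ using the subset-sum inequality \eqref{eq:subsetmass} with $\rho(w)\le h=v+1$.
\begin{itemize}
\item[(i)] Sum coordinates $1$ and $2$. This gives $w_1+w_2=vy$, so $v|y|\le v+1$, and $v\ge8$ forces $|y|\le1$.
\item[(ii)] Use coordinate $2$ alone. Then $|ux|\le|w_2|+s|y|\le v+1+2=v+3$. Since $u=v-2\ge6$, we have $2u>v+3$, which forces $|x|\le1$.
\end{itemize}
This leaves $(x,y)\in\{-1,0,1\}^2$. Discarding $(0,0)$, the only candidates outside $\pm R,\pm S,\pm(R+S)$ are $\pm(R-S)$. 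Here $R-S=(v,\,-u-v+s,\,u-s,\,0)$, whose positive mass is $v+u-s\ge 2v-4>v+1$ for $v\ge8$. So $\pm(R-S)$ are not active, and the list in the lemma is complete.

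The main obstacle I anticipate is finding the right linear combinations of coordinates so that the bounds on $x$ and $y$ come out cleanly. Bounding single coordinates alone couples $x$ and $y$ awkwardly. The combination $w_1+w_2=vy$ isolates $y$ immediately, and I would look for it first. After that, the remaining steps are routine checks.
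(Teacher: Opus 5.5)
Your proposal is correct and follows the paper's proof essentially step for step. You check that $R,S$ are relations, use the projected cross product $-(a,b,D)$ to get a lattice basis, combine coordinates one and two to force $|y|\le1$, use coordinate two to bound $x$, and exclude $\pm(R-S)$ by its degree $2v-2-s>v+1$. The only difference is cosmetic: you bound $|x|\le1$ in one step for all $|y|\le1$, whereas the paper first normalizes the sign and treats $y=0$ and $y=1$ separately.
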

\begin{proof}
The displayed vectors are relations, and their projections have
cross product $-(u-s,u+v-s,uv)$. They therefore form a full lattice
basis. Their degrees follow directly from the signs of their
coordinates; in particular,
$R+S=(2-v,2-s,u+s,-2)$ has degree $v$ because $s\le2$.

Write a short relation as $mR+nS$. Coordinates one and two sum
to $vn$. The absolute sum over any subset of coordinates is at
most the degree, so $v|n|\le v+1$. Thus $n\in\{-1,0,1\}$.
After changing sign, it suffices to consider $n=0,1$.

For $n=0$, the degree is $|m|(v-1)$, which forces $|m|\le1$.
For $n=1$, coordinate two is $um+s$. Its absolute value is at
most $v+1$. If $|m|\ge2$, then
\[
 |um+s|\ge u|m|-s\ge2u-s
 =2v-4-s\ge2v-6\ge v+2>v+1,
\]
where $u=v-2$, $v\ge8$ and $1\le s\le2$.
Thus $m\in\{-1,0,1\}$. The case $m=-1$ has degree
$\rho(S-R)=2v-2-s>v+1$ and is excluded. The remaining cases
give $S$ and $R+S$.
\end{proof}

\begin{lemma}\label{lem:Blist}
For $B(v)$, all nonzero relations of degree at most $h=v+1$ are
\[
 \pm P,\quad \pm Q,\quad \pm(P+Q),
\]
where
\[
 P=(1-v,v,0,-1),\qquad Q=(2,1-v,u,-1).
\]
Their degrees are $h-1,h-1,h-2$, respectively.
\end{lemma}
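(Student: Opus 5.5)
The plan is to repeat the argument of Lemma~\ref{lem:Alist} for the set $B(v)=\{0,v-2,2v-1,v(v-2)\}$ with $h=v+1$ and $u=v-2$. It has three steps: check the listed relations and their degrees, show that $P,Q$ form a lattice basis, and then bound the coefficients of an arbitrary short relation $mP+nQ$.

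\emph{Relations and degrees.} First I will check directly that $P$ and $Q$ lie in $L_A$.
\begin{itemize}
\item Both have coordinate sum zero: $1-v+v-1=0$ and $2+1-v+u-1=0$.
\item $P$ satisfies the weighted relation, since $v(v-2)-v(v-2)=0$.
\item $Q$ satisfies it, since $(1-v)(v-2)+(v-2)(2v-1)-v(v-2)=(v-2)(1-v+2v-1-v)=0$.
\end{itemize}
Next I read the degrees off the positive parts:
\begin{itemize}
\item $\rho(P)=v=h-1$;
\item $\rho(Q)=2+u=v=h-1$;
\item $P+Q=(3-v,1,u,-2)$ has positive mass $1+u=v-1=h-2$ (using $v\ge8$ for the signs).
\end{itemize}
For the basis property, the projections are $P'=(v,0,-1)$ and $Q'=(1-v,u,-1)$. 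Their cross product is $(u,\,2v-1,\,uv)$, which is exactly the element vector $(v-2,2v-1,v(v-2))$. Since $B(v)$ is primitive, the elementary lattice fact stated before Lemma~\ref{lem:Alist} shows that $P,Q$ form an integer basis of $L_A$.

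\emph{Bounding the coefficients.} Write a relation of degree at most $v+1$ as $mP+nQ$. I will use the bound $|z_i|\le\rho(z)$ from \eqref{eq:subsetmass} on single coordinates.
\begin{itemize}
\item Coordinate two equals $nu$. Because $2u=2v-4>v+1$ for $v\ge8$, this forces $n\in\{-1,0,1\}$. After a sign change it suffices to treat $n=0$ and $n=1$.
\item For $n=0$ the degree is $|m|(v-1)$, so $|m|\le1$.
\item For $n=1$, coordinate zero is $2+m(1-v)$ and coordinate one is $(m-1)v+1$.
  \begin{itemize}
  \item If $m\ge2$, coordinate zero has absolute value at least $2(v-1)-2=2v-4>v+1$.
  \item If $m\le-2$, coordinate one has absolute value at least $3v-1>v+1$.
  \item If $m=-1$, we get $Q-P=(v+1,1-2v,u,0)$, of degree $2v-2>v+1$.
  \end{itemize}
  So $m\in\{0,1\}$, which gives exactly $Q$ and $P+Q$.
\end{itemize}
Together with the case $n=0$, the complete list is $\pm P,\pm Q,\pm(P+Q)$.

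The only step needing some care is the case $n=1$, $|m|\ge2$: one must pick the coordinate whose size grows in the relevant sign direction. Coordinate zero handles $m\ge2$ and coordinate one handles $m\le-2$. Everything else is a direct substitution, with $v\ge8$ giving ample slack.
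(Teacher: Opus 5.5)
Your proposal is correct and follows the paper's proof essentially step for step: the basis comes from the projected cross product $(u,2v-1,uv)$, then $|n|\le1$ from coordinate two, then a case analysis on $m$; the only difference is that the paper uses coordinate one to get $m\in\{0,1,2\}$ and coordinate zero to exclude $m=2$. There are two arithmetic slips, neither of which affects your conclusions: $\rho(mP)=|m|v$, not $|m|(v-1)$, and $\rho(Q-P)=2v-1$, not $2v-2$.
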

\begin{proof}
Their projected cross product is $(u,2v-1,uv)$, so they are a
full lattice basis. The degrees follow from the coordinates,
including $P+Q=(3-v,1,u,-2)$.

For a short relation $mP+nQ$, coordinate two is $un$.
Since $2u>v+1$, we have $|n|\le1$. If $n=0$, then $|m|\le1$.
When $n=1$, coordinate one is $v(m-1)+1$, whose absolute bound
forces $m\in\{0,1,2\}$. At $m=2$, coordinate zero is $4-2v$,
with absolute value greater than $v+1$. Thus only $Q$ and
$P+Q$ remain. Sign symmetry treats $n=-1$.
\end{proof}

\subsection{Counting the representation fibers}
For a relation $z$ counted up to sign, the number of unordered
degree-$h$ representation pairs with difference $\pm z$ is
$M_{h-\rho(z)}$, by the padding description in
Section~\ref{sec:coverage}. Both families therefore have
\begin{equation}\label{eq:edges}
 M_2+2M_1=10+4+4=18
\end{equation}
collision edges. Counting edges alone is not enough: a fiber of
three points has three edges but decreases the number of distinct
sums by only two.

In the lattice basis of either lemma, the allowable differences
are
\[
 \pm(1,0),\quad\pm(0,1),\quad\pm(1,1).
\]
A fiber has at most three points. Indeed any triple of pairwise
allowable differences uses all three directions. Fixing an edge
with endpoints $(0,0),(1,0)$, its third point must be $(0,-1)$
or $(1,1)$; both cannot occur. Thus there are two triangle shapes
up to translation.

For Family~\eqref{eq:familyA}, representatives of the shapes are
$\{0,R,-S\}$ and $\{0,R,R+S\}$. The least coordinatewise
translation making each shape nonnegative is, respectively,
\[
 (0,v-s,s,1),\qquad (v-2,u,0,2).
\]
Their coordinate sums are $v+1=h$ and $2v-2>h$. Hence only the
first shape can appear at degree $h$, and it admits exactly one
translation at that degree. Its three exponent vectors are
\begin{equation}\label{eq:tripleA}
 (0,v-s,s,1),\quad
 (1,2-s,u+s,0),\quad
 (v-1,0,0,2).
\end{equation}
They all represent the sum $2d$.

For Family~\eqref{eq:familyB}, the triangle $\{0,P,P+Q\}$ has
least translation $(v-1,0,0,2)$ of degree $h$. Its three vectors
are
\begin{equation}\label{eq:tripleB}
 (v-1,0,0,2),\quad (0,v,0,1),\quad (2,1,u,0),
\end{equation}
again all representing $2d$. The other triangle,
$\{0,P,-Q\}$, has least translation $(v-1,0,u,1)$ of degree
$2v-2>h$.

Each family therefore has exactly one triple fiber and no larger
fiber. Of the $18$ edges in \eqref{eq:edges}, three belong to
that triple and the other $15$ belong to double fibers. The
defect is
\[
 15+(3-1)=17.
\]
This proves Proposition~\ref{thm:family}.
The argument uses only relations of degree at most $h$; it
makes no assertion about the complete higher-degree relation
ideal of these four-sets.

\section{Restrictions on large representatives}\label{sec:large}
We turn to the upper bound in Theorem~\ref{thm:compression}.
Throughout Sections~\ref{sec:large}--\ref{sec:special}, assume
$h\ge17$ and write $d_h=(h-3)(h-1)$.
We will classify the cardinalities that a representative of
diameter greater than $d_h$ can have, then realize those
cardinalities by other sets of diameter at most $d_h$.

\begin{lemma}\label{lem:large_ranktwo}
Let $A=\{0,a,b,D\}$ be normalized, with $D>d_h$ and active
rank two. Its least relation degree is at least $h-3$.
It has at most three unsigned active directions, and every
two independent active vectors form a basis of $L_A$.
\end{lemma}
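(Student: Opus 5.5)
Everything comes from Lemma~\ref{lem:diameter}, applied with $D>d_h=(h-3)(h-1)$.

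\emph{Least degree.} Active rank two gives two independent active vectors. First take $u$ of least degree $r=\rho(u)$, with $r\le h$. Since the active span has dimension two, some active $v$ is independent of $u$, and $\rho(v)\le h$. Lemma~\ref{lem:diameter} with index at least one gives
\[
 (h-3)(h-1)<D\le\rho(u)\rho(v)\le rh.
\]
Hence $r>(h-3)(h-1)/h=h-4+3/h$, so $r\ge h-3$.

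\emph{Basis property.} Let $u,v$ be any two independent active vectors, with index $k=[L_A:\Z u+\Z v]$. Then
\[
 k(h-3)(h-1)<kD\le h^2.
\]
For $h\ge17$ we have $2(h-3)(h-1)>h^2$, so $k=1$. Thus every independent active pair is a basis.

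\emph{At most three directions.} Suppose there were three pairwise independent unsigned active directions $u,v,w$. Each pair is a basis, so writing $w=\alpha u+\beta v$ gives $\det(u,w)=\pm1$ and $\det(v,w)=\pm1$, hence $\alpha,\beta\in\{\pm1\}$. After sign changes, $w=u+v$ or $w=u-v$. A fourth direction $x$ would also have to be $\pm u\pm v$ and, relative to the basis $\{u,w\}$, of the form $\pm u\pm w$. The candidates $u+v$ and $u-v$ differ by $2v$, so $\det(u+v,u-v)=\pm2$, which contradicts the basis property. Hence there are at most three directions.

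\emph{Primitivity.} We also need every active vector to be primitive, so that ``directions'' means lines. A proper multiple $2z$ would have degree at least $2r\ge2(h-3)>h$ for $h\ge7$, so it cannot be active.

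\emph{Main obstacle.} The delicate step is the bound on directions, specifically turning the index-one condition into the explicit description $\pm u\pm v$. The lattice computation handles this cleanly via determinants, so the rest reduces to checking the numerical inequalities at $h\ge17$.
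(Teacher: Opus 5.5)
Your proof is correct. The least-degree step and the index-one step are the paper's: both apply Lemma~\ref{lem:diameter} with $D>d_h$ and use $2d_h>h^2$.

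The difference is in bounding the number of directions. The paper first injects the active vectors into $L_A/3L_A$ to get at most four directions. It then excludes four by a pigeonhole in $L_A/2L_A$: two of four primitive vectors share a nonzero residue, so their index is even, and $2D\le h^2$ contradicts $D>d_h$. The basis property is stated last, via the same inequality.

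You instead prove the basis property first and derive the direction bound from unimodularity alone. You write a third direction as $\alpha u+\beta v$ with $|\alpha|=|\beta|=1$, and note that $|\det(u+v,u-v)|=2$ excludes a fourth. The two arguments encode the same fact: pairwise unimodular primitive vectors have pairwise distinct residues modulo $2L_A$.

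Your route buys two things. It drops the mod-$3$ injection, which is redundant in the paper's proof since the mod-$2$ pigeonhole already rules out any four directions. It also exhibits the shape $\{u,v,u\pm v\}$, which the paper re-derives at the start of Proposition~\ref{prop:large_defects}. The paper's version is shorter and coordinate-free.

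Your primitivity paragraph comes after the step that uses it, but it is logically independent. It is what makes distinct unsigned directions pairwise independent, so the argument is complete.
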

\begin{proof}
Let $r$ be the least nonzero relation degree.
The active rank is two, so a shortest relation has an
independent active partner. If $r\le h-4$,
Lemma~\ref{lem:diameter} gives
\[
 D\le rh\le h(h-4)<d_h,
\]
a contradiction. Thus $r\ge h-3$.
The active vectors and zero inject into $L_A/3L_A$, as in
Lemma~\ref{lem:short}, since $2h/3<h-3$.
There are at most four unsigned directions. Every active
vector is primitive because $h/2<r$.

Suppose there are four directions.
A primitive lattice vector is nonzero modulo $2L_A$,
and its negative has the same residue.
The rank-two group $L_A/2L_A$ has only three nonzero classes,
so two representatives $p,q$ of different directions have
the same residue. They are independent.
Both $(p+q)/2$ and $(p-q)/2$ belong to $L_A$, so
the index of $\Z p+\Z q$ in $L_A$ is divisible by two.
The index factor in \eqref{eq:product_index} yields
\[
 2D\le\rho(p)\rho(q)\le h^2.
\]
But $d_h>h^2/2$ for $h\ge8$, contradicting $D>d_h$.
There are therefore at most three directions.

Finally, any independent active pair of index at least two
would give the same contradiction. Every such pair is a
full lattice basis.
\end{proof}

\begin{proposition}\label{prop:large_defects}
Under the hypotheses of Lemma~\ref{lem:large_ranktwo}, the
defect of $A$ is either a sum of at most three members of
$\{1,4,10,20\}$ or one of $17,23,26,29$.
\end{proposition}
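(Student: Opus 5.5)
Under the hypotheses of Lemma~\ref{lem:large_ranktwo}, every active vector is primitive and there are at most three unsigned active directions. So $2r\ge2(h-3)>h$, and Lemma~\ref{lem:fiber_tools} applies. Every fiber then has at most three points, and
\[
 \Delta_h(A)=\sum_{z}M_{h-\rho(z)}-T,
\]
where the sum runs over at most three unsigned active directions. Each degree lies in $\{h-3,h-2,h-1,h\}$, so each summand lies in $\{20,10,4,1\}$. If $T=0$ the defect is a sum of at most three members of $\{1,4,10,20\}$, which is the first alternative. The work is therefore to control $T$ when it is positive.

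A triple fiber uses three pairwise differences in three distinct directions. Hence $T>0$ forces exactly three directions forming a triangle type, say $\pm u,\pm v,\pm(u-v)$. By Lemma~\ref{lem:large_ranktwo}, any two of them form a basis of $L_A$. The height identity \eqref{eq:triangle_height_sum} gives $H+H'=\rho(u)+\rho(v)+\rho(u-v)\ge3(h-3)>2h$ for $h\ge17$, so at most one orientation occurs. That orientation gives $T=M_{h-H}$ triple fibers, and the translations are distinct by the padding rule. The height satisfies $H\ge\max$ of the three degrees, since the triangle contains each difference as an edge. I would also bound $H$ from below via the pairwise-minimum identity $\sum_i\min(P_i,Q_i)=h-\rho(P-Q)$ used in Section~\ref{sec:rigidity}. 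The three pairwise minima of the vertices of the translated triangle at degree $H$ have total sizes $H-\rho(u)$, $H-\rho(v)$, $H-\rho(u-v)$. Their supports are disjoint after removing the common minimum, and the common minimum is zero at the least translation. These three sizes are each at most $3$, and their sum is at most... I expect this to force $H\ge h-1$ in most cases, or at least to bound $T\in\{0,1,4\}$, with $T=10$ possible only when $H=h-2$.

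Next I would enumerate. Write the degrees as $h-e_1,h-e_2,h-e_3$ with $e_j\in\{0,1,2,3\}$, and $H=h-k$ with $k\le\min_j e_j$. The defect is $M_{e_1}+M_{e_2}+M_{e_3}-M_k$. The outputs that are not sums of three basic defects should be exactly $17=10+4+4-1$, $23=20+4$ minus one after adjustment, $26=20+10-4$, $29=20+10-1$, and so on. I would list all tuples with $k\le\min e_j$ and discard those forbidden by the constraints. Examples of forbidden tuples are those with two directions of degree $h-3$, excluded by a product-bound argument, and those with $k$ too large, excluded by the disjoint-minima count, which requires $\sum_j(e_j-k)\le$ the available mass.

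The main obstacle is this last exclusion: showing which $(e_1,e_2,e_3,k)$ are geometrically impossible. For that I would combine Lemma~\ref{lem:diameter} with index one. For example, two directions of degrees $h-3,h-3$ give $D\le(h-3)^2<d_h$, and degrees $h-3,h-2$ give $D\le(h-3)(h-2)$, which must still exceed $d_h$. I would also use the constraint that the pairwise minima decompose the height. Any residual values coinciding with sums of basic defects, such as $20+4+4-4$, fall into the first alternative automatically. So only genuinely new values need to be checked against the list $17,23,26,29$.
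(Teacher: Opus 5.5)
Your framework is the paper's: Lemma~\ref{lem:fiber_tools}, a single triangle orientation from \eqref{eq:triangle_height_sum}, and $T=M_k$ where $H=h-k$ is at least the largest edge degree, so $0\le k\le e_3$ when $e_1\ge e_2\ge e_3$. But the proposal stops exactly at the step that decides the statement, and your hints for that step are partly wrong.

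No geometric exclusion via pairwise minima is needed. The only constraint required is the index-one product bound. Since any two of the three directions form a basis of $L_A$ (Lemma~\ref{lem:large_ranktwo}), Lemma~\ref{lem:diameter} gives $(h-e_1)(h-e_2)\ge D>d_h=(h-3)(h-1)$. Equivalently, $(4-e_1-e_2)h+e_1e_2-3>0$, which leaves only $e_1\le2$ or $(e_1,e_2,e_3)=(3,0,0)$.
\begin{itemize}
\item You leave the pair of degrees $h-3,h-2$ open, although $(h-3)(h-2)<d_h$ rules it out.
\item You never mention the pair $h-3,h-1$. It is excluded only because $(h-3)(h-1)=d_h$ exactly while $D>d_h$ is strict.
\end{itemize}
This boundary case is essential. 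The tuple $(e_1,e_2,e_3,k)=(3,1,1,0)$ would give $20+4+4-1=27$, which is neither a sum of at most three basic defects nor on the list $17,23,26,29$.

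Once these exclusions are made, you only need to evaluate $M_{e_1}+M_{e_2}+M_{e_3}-M_k$ over the surviving tuples. The only non-basic values are:
\begin{itemize}
\item $17=10+4+4-1$, from $(2,1,1)$;
\item $23=10+10+4-1$, from $(2,2,1)$;
\item $26=30-4$ and $29=30-1$, from $(2,2,2)$ with $k=1,0$.
\end{itemize}
Every other value ($2,5,8,11,14,20,21$) is a sum of at most two basic defects.

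Your decompositions $26=20+10-4$ and $29=20+10-1$ cannot arise. A triple fiber requires three directions, and a direction of degree $h-3$ can occur only together with two directions of degree $h$. Since the proposition asserts only a necessary condition, nothing beyond $k\le e_3$ and the product bound is required, and the pairwise-minimum refinement of $H$ can be dropped.
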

\begin{proof}
If there are three directions, let two be $u,v$, a lattice
basis by Lemma~\ref{lem:large_ranktwo}, and write the third
as $pu+qv$. Its pairs with $u$ and $v$ have indices
$|q|$ and $|p|$. Both are one. The third direction is
therefore $u+v$ or $u-v$, up to sign.
There is one triangle type.

Write the ordered degree deficits as $j_1\ge j_2$ in the
two-direction case, or $j_1\ge j_2\ge j_3$ in the three-direction
case: the degrees are $h-j_1,h-j_2,h-j_3$.
They lie in $\{h-3,h-2,h-1,h\}$.
The product of the two smallest degrees must exceed $d_h$,
which is equivalent to
\[
 (4-j_1-j_2)h+j_1j_2-3>0.
\]
For $h\ge17$ and $0\le j_2\le j_1\le3$, the possible pairs are
\begin{equation}\label{eq:degree_pairs}
 (j_1,j_2)\in
 \{(0,0),(1,0),(1,1),(2,0),(2,1),(2,2),(3,0)\}.
\end{equation}
The pair $(3,1)$ gives equality and is excluded.
If $j_1+j_2\ge5$, the product is smaller than $d_h$.

There are no active multiples and $2(h-3)>h$, so
Lemma~\ref{lem:fiber_tools} applies.
With two directions no triangle is possible, and the
defect is $M_{j_1}+M_{j_2}$.
With three directions, \eqref{eq:degree_pairs} leaves
\[
 0\le j_3\le j_2\le j_1\le2,\qquad\hbox{or}\qquad(j_1,j_2,j_3)=(3,0,0).
\]
The height sum of the two triangle orientations is at least
$3(h-3)>2h$. Hence at most one orientation occurs.
Its height is at least the largest edge degree $h-j_3$.
The padding count and \eqref{eq:edge_count} give
\begin{equation}\label{eq:large_defect_count}
 \Delta_h(A)=M_{j_1}+M_{j_2}+M_{j_3}-T,\qquad
 T\in\{0\}\cup\{M_0,\ldots,M_{j_3}\}.
\end{equation}
The nonzero corrections are listed in
Table~\ref{tab:exceptional}.
\begin{table}[H]
\centering
\caption{Possible triple-fiber corrections for a large
representative. These are necessary cases, not an existence
claim for every degree configuration.}\label{tab:exceptional}
\begin{tabular}{ccc}
\toprule
$(j_1,j_2,j_3)$&Collision edges&Defects with $T>0$\\
\midrule
$(0,0,0)$&3&2\\
$(1,0,0)$&6&5\\
$(1,1,0)$&9&8\\
$(1,1,1)$&12&11,\ 8\\
$(2,0,0)$&12&11\\
$(2,1,0)$&15&14\\
$(2,1,1)$&18&17,\ 14\\
$(2,2,0)$&21&20\\
$(2,2,1)$&24&23,\ 20\\
$(2,2,2)$&30&29,\ 26,\ 20\\
$(3,0,0)$&22&21\\
\bottomrule
\end{tabular}
\end{table}
Every entry except $17,23,26,29$ is a sum of at most two
members of $\{M_0,M_1,M_2,M_3\}=\{1,4,10,20\}$.
When $T=0$, the defect is already a sum of three such members.
This proves the proposition.
\end{proof}

The low-rank cases require only a standard principal
representative.
\begin{lemma}\label{lem:principal_representative}
If the active rank is one and its primitive relation has
degree $r\le h-4$, its cardinality has a representative
of diameter less than $d_h$.
\end{lemma}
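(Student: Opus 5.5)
The plan is to replace $A$ by an explicit \emph{principal} four-set with the same cardinality and small diameter. By Lemma~\ref{lem:rankone}, the cardinality of $A$ is $M_h-M_{h-r}$, where $r$ is the degree of the primitive generator of the active line. That value depends only on $h$ and $r$. By Lemma~\ref{lem:replacement}, it therefore suffices to exhibit, for each $2\le r\le h-4$, one four-set $B$ with the following three properties:
\begin{itemize}
\item $\operatorname{ndiam}(B)<d_h$;
\item some primitive relation of $B$ has degree $r$;
\item every relation of degree at most $h$ is an integer multiple of that one.
\end{itemize}
Then the active rank of $B$ is one with generator degree $r$, and Lemma~\ref{lem:rankone} gives $|hB|=M_h-M_{h-r}$.

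The candidate shape uses a three-point relation on $\{0,p,q\}$ with $0<p<q$ and $\gcd(p,q)=1$. The relation $q\cdot p=p\cdot q$ corresponds to $z=(q-p,-q,p,0)$, which has degree $q$. I will take $q=r$ and choose $p$ coprime to $r$; for $r=2$ this forces $p=1$, giving the relation $2\cdot1=0+2$. Every relation supported on $\{0,p,r\}$ is then a multiple of $z$, because homogeneous relations on three distinct elements form a rank-one lattice. It remains to add a fourth element $x$ so that every relation $w$ with $w_3=k\neq0$ has degree greater than $h$.

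I would first try placing $x$ so that $kx$ cannot be written as a signed combination of $0,p,r$ with total mass at most $h-|k|$. One option is a gap far from the sums reachable with small mass. A better option is a congruence obstruction: scale the cluster by $m$ and take $\gcd(x,m)=1$, which forces $m\mid k$. Either way, the verification runs like Lemma~\ref{lem:Alist}. Write a general relation as $\alpha z+\beta w_0$, where $w_0$ is a chosen partner completing a lattice basis, certified by the cross-product test of Section~\ref{sec:family}. Then the subset-mass bound \eqref{eq:subsetmass} forces $|\beta|\ge1$ to push the degree above $h$.

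The main obstacle is the diameter budget. The naive congruence choice gives $B=\{0,1,m,mr\}$ with $m=h+1$, whose diameter is about $(h+1)r$. At $r=h-4$ this is $h^2-3h-4$, which exceeds $d_h=h^2-4h+3$. So the construction must exploit the freedom in $p$ and in the position of $x$: the cluster $\{0,p,r\}$ should be placed with a spacing smaller than $h+1$, with $x$ interleaved. The aim is that the partner $w_0$ has degree just $h+1$ while the index identity of Lemma~\ref{lem:diameter} gives $D=\rho(z)\rho(w_0)-(\text{correction})$ of size about $r(h-3)$, which is below $d_h$ when $r\le h-4$. I would first test the values $r=h-4$ and $r=h-5$ to fix the right interleaving, then prove the general relation list, and handle small $r$ separately, where any crude choice fits easily.
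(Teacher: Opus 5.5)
Your reduction is sound. By Lemma~\ref{lem:rankone} the cardinality is $M_h-M_{h-r}$. It therefore suffices to exhibit one four-set $B$ with $\ndiam(B)<d_h$ whose relations of degree at most $h$ are exactly the multiples of a primitive relation of degree $r$.

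The gap is that you never exhibit $B$. Your last paragraph is a search plan (test $r=h-4,h-5$, ``fix the right interleaving'', treat small $r$ separately), so no value of $r$ is actually handled. For an existence statement like this, the construction together with its short-relation list is the entire proof. Your diagnosis of the obstacle also points the wrong way. Neither interleaving nor a congruence obstruction is needed. Moreover, Lemma~\ref{lem:diameter} is only an upper bound on $D$, not an identity of the form $D=\rho(z)\rho(w_0)-(\text{correction})$.

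The ``gap'' option you set aside already works. Take $B=\{0,1,r,rh+1\}$, which has four distinct elements since $r\ge2$. Let $w$ be a relation with $\rho(w)\le h$. Then
\[
 (rh+1)w_3=-(w_1+rw_2).
\]
The linear form with coefficients $(0,1,r,0)$, all in $[0,r]$, satisfies $|w_1+rw_2|\le r\rho(w)\le rh$ on a zero-sum vector: split $w=w^+-w^-$ as in the proof of \eqref{eq:subsetmass}. Hence $w_3=0$. Since $\gcd(1,r)=1$, $w$ is then a multiple of $(r-1,-r,1,0)$, which is primitive of degree $r$. The diameter is $rh+1\le h^2-4h+1<d_h$.

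The paper runs the same coefficient-range argument on
\[
 C_{h,r}=\{0,1,(r-1)h+1,(r-1)h+r\}.
\]
With $m=(r-1)h+1$, a short relation satisfies $m(z_2+z_3)+z_1+(r-1)z_3=0$ and $|z_1+(r-1)z_3|\le(r-1)h<m$. Hence $z_2+z_3=0$, and $z$ is a multiple of the four-point relation $(r-1,-(r-1),-1,1)$. The diameter there is $(r-1)h+r\le h^2-4h-4$.

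Either set closes the gap. Your three-point shape only needs the fourth element placed beyond $rh$, instead of being made coprime to a scaling factor.
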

\begin{proof}
By Lemma~\ref{lem:rankone}, the defect is $M_{h-r}$.
Use the principal representative
\cite[Section 10.2, (191)--(192)]{Shin}:
\[
 C_{h,r}=\{0,1,(r-1)h+1,(r-1)h+r\}.
\]
Put $m=(r-1)h+1$.
A short relation satisfies
\[
 m(z_2+z_3)+z_1+(r-1)z_3=0.
\]
The coefficients of the last expression lie in $[0,r-1]$,
so its absolute value is at most
$(r-1)\rho(z)\le(r-1)h<m$.
It follows that $z_2+z_3=0$ and every short relation is a
multiple of $(r-1,-(r-1),-1,1)$.
Thus $C_{h,r}$ has the required rank-one cardinality.
Its diameter is
\[
 (r-1)h+r\le h^2-4h-4<d_h.
\]
\end{proof}

The remaining rank-one defects are $M_0,M_1,M_2,M_3$.
Together with defect zero, these are included in the
constructions of the next section.

\section{Uniform short realizations}\label{sec:short_realizations}
Proposition~\ref{prop:large_defects} reduces most of the upper
bound to realizing sums of members of $\{1,4,10,20\}$.
We give two constructions that handle all of these sums at once.
For convenience put $M_{-1}=0$ in this section; an index $-1$
will mean that the corresponding direction is inactive.

For one or two basic defects, we use two pairs of nearby
integers: $0,1$ and $q,q+m$. The gap $m$ fixes one relation
degree. Choosing $q$ then sets the degree of one further
relation while keeping all other relations above degree $h$.

\begin{lemma}[One or two basic defects]\label{lem:two_defects}
Let $h\ge17$ and $j_1,j_2\in\{-1,0,1,2,3\}$.
Put
\begin{equation}\label{eq:two_family}
 m=h-j_1-1,\qquad \ell=\lfloor m/2\rfloor,\qquad
 q=m(h-\ell)+\ell-j_2,\qquad A=\{0,1,q,q+m\}.
\end{equation}
Then $A$ is primitive,
\[
 \Delta_h(A)=M_{j_1}+M_{j_2},\qquad
 \operatorname{diam}(A)\le\tfrac12h^2+2h+\tfrac12<d_h.
\]
\end{lemma}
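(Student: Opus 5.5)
The plan is to read off the relation lattice of $A=\{0,1,q,q+m\}$ explicitly, show that at most two unsigned directions are active, and then apply Lemma~\ref{lem:fiber_tools} with $T=0$.

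\emph{Primitivity and the two basic relations.} Primitivity is immediate, since $1\in A-A$. Write relations in the coordinate order of $(0,1,q,q+m)$. The difference $(q+m)-q=m\cdot(1-0)$ gives
\[
 u=(m,-m,-1,1),\qquad \rho(u)=m+1=h-j_1 .
\]
The identity $q=(h-\ell)m+(\ell-j_2)$ can be rewritten as the multiset equality $(h-\ell+1)\,q=(h-\ell)(q+m)+(\ell-j_2)\cdot 1$. Padding the left side with $\ell-j_2-1$ zeros balances the counts, which gives
\[
 w=(\ell-j_2-1,\,-(\ell-j_2),\,h-\ell+1,\,-(h-\ell)),\qquad \rho(w)=h-j_2 .
\]
When $j_1=-1$ or $j_2=-1$, the corresponding degree is $h+1$, so that direction is inactive. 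This matches the convention $M_{-1}=0$.

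\emph{Lattice basis.} I will compute the projected cross product of $u,w$ and check that it equals $\pm(1,q,q+m)$. By the elementary lattice fact used in Lemmas~\ref{lem:Alist} and~\ref{lem:Blist}, $u,w$ are then a basis of $L_A$.

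\emph{No other short relations (main obstacle).} Write a relation as $z=\alpha u+\beta w$. Coordinates two and three of $z$ sum to $\beta$. More usefully, coordinate two is $-\alpha+\beta(h-\ell+1)$ and coordinate zero is $\alpha m+\beta(\ell-j_2-1)$. Using the subset bound \eqref{eq:subsetmass} on these coordinates, together with $m\ge h-4$, I will show that $\rho(z)\le h$ forces $|\beta|\le1$.
\begin{itemize}
\item If $\beta=0$, the degree is $|\alpha|(m+1)$, so $|\alpha|\le1$.
\item If $\beta=\pm1$, the coordinates are roughly $\alpha m\pm\ell$ and $\mp\alpha m\mp\ell$ in positions zero and one. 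The choice $\ell=\lfloor m/2\rfloor$ is exactly what makes both $\alpha=0$ and $\alpha=\mp1$ possible in principle. Here I must check directly that $w\pm u$ have degree about $m+\ell$ or $h-\ell+m$, which exceeds $h$ for $h\ge17$. All $|\alpha|\ge2$ are excluded by the zeroth coordinate.
\end{itemize}
This case check is the delicate step. I will first bound coordinate zero and coordinate two jointly, and then verify $u\pm w$ by hand.

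\emph{Counting and diameter.} It follows that the active directions are among $\pm u,\pm w$. The least degree is at least $h-4>h/2$, and there are at most two directions, so Lemma~\ref{lem:fiber_tools} applies. A triple fiber would require three distinct directions, so $T=0$, and hence $\Delta_h(A)=M_{h-\rho(u)}+M_{h-\rho(w)}=M_{j_1}+M_{j_2}$.

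For the diameter, $q+m=m(h-\ell+1)+\ell-j_2$. Since $\ell\ge(m-1)/2$ and $m\le h$, maximizing $m(h-m/2)$ over $m$ gives the bound $\tfrac12h^2+2h+\tfrac12$. This is below $(h-3)(h-1)$ once $h^2-12h+5>0$, which holds for $h\ge17$.
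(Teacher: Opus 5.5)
Your proposal is correct and is essentially the paper's argument written in the basis $w,u$: your $\beta$ is the paper's $s=z_2+z_3$ and your $\alpha$ equals $s(h-\ell)-k$. Your two-coordinate bound replaces the inequality $(q+m-1)s\le mh$, and checking $\alpha\in\{-1,0,1\}$ for $\beta=1$, with $|\alpha|\ge2$ excluded by coordinate zero, replaces the paper's slope argument in $k$.

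One correction in the step you flag as delicate: the exact degrees are $\rho(w+u)=h+m-j_2-1$ and $\rho(w-u)=h+(m-2\ell)+j_2+2$, not the values you estimated. So $w-u$ clears degree $h$ only by $(m-2\ell)+j_2+2\ge1$, with equality when $m$ is even and $j_2=-1$. This tight case is where $2\ell\le m$ and $j_2\ge-1$ are needed, not $h\ge17$.
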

\begin{proof}
The set is increasing and primitive because $q>1$ and
it contains $0,1$.
For the relations, use the parametrization in
\cite[proof of Theorem 10.3, (189)--(190)]{Shin}.
For any homogeneous relation, set $s=z_2+z_3$ and $k=-z_3$.
The relation equation gives the unique parametrization
\begin{equation}\label{eq:two_relations}
 z=(e-s,-e,s+k,-k),\qquad e=qs-mk.
\end{equation}
For $s=0$, the primitive generator has degree $m+1=h-j_1$.
It is active exactly when $j_1\ge0$; its proper multiples
are inactive.

After changing sign, suppose $s\ge1$.
If $k<0$, then $e>q>h$, so the relation is not active.
For $k\ge0$, direct summation of the positive coordinates
in \eqref{eq:two_relations} gives
\[
 \rho(z)=s+k+\delta,\qquad
 \delta=\operatorname{dist}(e,[0,s]).
\]
Using $e=qs-mk$, we obtain
\[
 m\rho(z)-(q+m-1)s=m\delta-(e-s)\ge0.
\]
For $e\le s$, the last inequality follows from $\delta\ge0$.
For $e>s$, it is $(m-1)(e-s)\ge0$.
Thus every active relation with $s\ge1$ satisfies
\[
 (q+m-1)s\le mh.
\]
Write $p=m-2\ell\in\{0,1\}$.
Since $h-m=j_1+1$, substitution in \eqref{eq:two_family} yields
\[
 2(q+m-1)-mh=m(j_1+4+p)-p-2j_2-2>0.
\]
Indeed $m\ge h-4\ge13$, $j_1+4+p\ge3$, and
$p+2j_2+2\le9$.
It follows that $s=1$.

Let $L=h-\ell$.
At $k=L$, the value $e=\ell-j_2$ lies strictly between $1$
and $m$, and the degree is $h-j_2$.
At $k=L+1$, the degree is $h+p+j_2+2>h$.
At $k=L-1$, it is $h-j_2+m-1>h$.
As $k$ moves farther from $L$, the degree increases with
slope $m-1$ on the left and $m+1$ on the right.
There is therefore precisely one further active direction
when $j_2\ge0$, and none when $j_2=-1$.

There are at most two directions and no active multiples.
Since twice the smallest active degree exceeds $h$,
Lemma~\ref{lem:fiber_tools} applies, and no triple is possible.
Edge counting proves $\Delta_h(A)=M_{j_1}+M_{j_2}$, including
the case of no active relation.

Finally, $j_2\ge-1$ gives
\begin{align*}
 q+m
 &\le m(h+1)-(m-1)\ell+1\\
 &\le-\tfrac12m^2+(h+2)m+\tfrac12\\
 &\le\tfrac12h^2+2h+\tfrac12<d_h.
\end{align*}
The second inequality uses $\ell\ge(m-1)/2$.
For the third, the quadratic is increasing on $0\le m\le h$.
The last inequality holds for $h\ge17$.
\end{proof}

For three basic defects, we choose coprime elements $x,b$
so that the relation on $\{0,x,b\}$ has degree $b=r_3$.
We then choose the fourth element to give the two other
prescribed degrees. Taking $x$ close to $b/2$ keeps the
diameter near $3h^2/4$. The two possible triangles will
have height greater than $h$, so their directions
contribute edges without producing triple fibers.

\begin{lemma}[Three basic defects]\label{lem:three_defects}
Let $h\ge17$ and
$r_1,r_2,r_3\in\{h-3,h-2,h-1,h\}$.
Write $b=r_3$, and choose
\begin{equation}\label{eq:alpha_choice}
 \alpha=
 \begin{cases}
 (b-1)/2,&b\text{ odd},\\
 b/2-1,&b\equiv0\pmod4,\\
 b/2-2,&b\equiv2\pmod4.
 \end{cases}
\end{equation}
Put
\begin{equation}\label{eq:three_family}
 x=b-\alpha,\qquad \beta=r_1+\alpha-r_2,\qquad
 D=r_1b-\beta x,\qquad A=\{0,x,b,D\}.
\end{equation}
Then $A$ is primitive and
\[
 \Delta_h(A)=M_{h-r_1}+M_{h-r_2}+M_{h-r_3},\qquad
 D\le\tfrac34h^2+4<d_h.
\]
\end{lemma}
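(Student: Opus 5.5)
The plan is to exhibit three explicit relations of degrees $r_1,r_2,r_3$, show they are the only active directions, and then count fibers with Lemma~\ref{lem:fiber_tools}. In coordinates indexed by $(0,x,b,D)$, the relations are
\[
 w=(-\alpha,\ b,\ -x,\ 0),\qquad
 u=(r_1-\beta-1,\ \beta,\ -r_1,\ 1),\qquad
 u-w=(r_2-1,\ \beta-b,\ x-r_1,\ 1).
\]
The relation $w$ lives on $\{0,x,b\}$. The relation $u$ encodes $D=r_1b-\beta x$. Each has zero coordinate sum by \eqref{eq:three_family}.

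First I would record the sign information forced by \eqref{eq:alpha_choice}. Since $b\ge h-3\ge14$, we have $\alpha\ge b/2-2$. From this, $\beta=r_1+\alpha-r_2\ge\alpha-3>0$ and $r_1-\beta-1=r_2-\alpha-1\ge0$. Also $\beta-b\le0$ and $x-r_1\le b/2+2-r_1\le0$, up to the small boundary cases, which I would check directly. These signs give degrees $\rho(w)=b=r_3$, $\rho(u)=r_1$ and $\rho(u-w)=r_2$.

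Primitivity reduces to $\gcd(x,b)=\gcd(\alpha,b)=1$. For odd $b$ this holds because $\gcd((b-1)/2,b)=1$. For even $b$, the chosen $\alpha$ is odd and any common divisor of $\alpha$ and $b$ divides $b-2\alpha\in\{2,4\}$, so the gcd is $1$.

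Next I would show that $u,w$ is a lattice basis. Their projected cross product should be $\pm(x,b,D)$, which is primitive, and then the lattice fact stated before Lemma~\ref{lem:Alist} applies. Alternatively, Lemma~\ref{lem:diameter} gives the same conclusion once $D$ is large enough.

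Then I would enumerate short relations $mu+nw$, following the pattern of Lemmas~\ref{lem:Alist} and~\ref{lem:Blist}. Coordinate three equals $m$, and the subset bound \eqref{eq:subsetmass} gives $|m|\le h$. A better choice is a subset sum whose dominant term is $nb$ or $m\beta$, each of size roughly $h/2$ or more. That should force $|m|,|n|\le1$. Among the remaining candidates, $u+w$ has degree about $r_1+b-O(1)>h$, and the purely multiple cases are inactive. So the active directions are exactly $\pm u,\pm w,\pm(u-w)$, all primitive with $2\min r_i>h$.

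With these three directions, Lemma~\ref{lem:fiber_tools} gives $E=M_{h-r_1}+M_{h-r_2}+M_{h-r_3}$ collision edges. It remains to show $T=0$, meaning neither triangle orientation $\{0,u,w\}$ nor $\{0,-u,-w\}$ has translation height at most $h$. I would compute both heights coordinatewise from the explicit vectors and expect each to be roughly $h+h/2$, because $w$ has large mass in coordinates one and two, where $u$ has opposite-sign mass. This triangle-height computation, together with tightening the short-relation enumeration to exactly $|m|,|n|\le1$ uniformly over all $(r_1,r_2,r_3)$, is the main obstacle. I would handle it by bounding the relevant coordinate minima in terms of $\alpha$ and $\beta$ with $\alpha\approx b/2$.

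Finally, for the diameter, write
\[
 D=r_1b-\beta x\le h^2-(\alpha-3)(b-\alpha).
\]
Because $\alpha$ and $b-\alpha$ are both within $2$ of $b/2$ and $b\ge h-3$, the subtracted term is about $h^2/4-O(h)$. Tracking the constants should give $D\le\tfrac34h^2+4$. The inequality $\tfrac34h^2+4<(h-3)(h-1)$ holds for $h\ge17$.
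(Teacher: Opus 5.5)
Your skeleton is the paper's. Your $w,u,u-w$ are exactly its $R,S,S-R$, and the primitivity argument, the cross-product basis check and the triangle heights $r_1+\alpha$, $r_2+b-\alpha$ all agree. The genuine gap is quantitative, and it sits in the two places you left as ``tracking constants.''

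First, the diameter bound. Your inequality $D\le h^2-(\alpha-3)(b-\alpha)$ maximizes $r_1b$ and minimizes $\beta$ independently. But $\beta=\alpha-3$ forces $r_1=h-3$, so the two extremes never coincide. The loss is of order $h$: when $b=h-3$ your right side is about $\tfrac34h^2+3h$, not $\tfrac34h^2+4$. At $h=17$, $b=14$, $\alpha=5$ it equals $289-18=271>224=d_{17}$, so it does not even give $D<d_h$. You need the exact expansion
\[
 D=r_1b-(r_1+\alpha-r_2)(b-\alpha)=\alpha r_1+(b-\alpha)r_2-\alpha(b-\alpha).
\]
Thus $D/b$ is a convex combination of $r_1,r_2\in[h-3,h]$ minus $\alpha(1-\alpha/b)\in[b/4-4/b,\,b/4]$. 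This gives $D\le hb-b^2/4+4\le\tfrac34h^2+4$.

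Second, the enumeration of short relations. The same identity gives the lower bound $D/b\ge\tfrac34h-3>h/2$, and some such lower bound is indispensable. The map $(m,n)\mapsto(z_1,z_2)$ has determinant $D$, and a dominant-term heuristic on $z_1=m\beta+nb$ fails. Since $2\beta-b$ is bounded, $(m,n)=(2,-1)$ makes $z_1$ small, so the coefficients must be controlled jointly through the relation equation.

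The paper writes $-Dz_3=xz_1+bz_2=x(z_1+z_2)+\alpha z_2$ and applies \eqref{eq:subsetmass} to $z_1+z_2$ and to $z_2$. This gives $|Dz_3|\le(x+\alpha)h=bh<2D$, hence $|z_3|\le1$. For $z_3=0$, the only active relations are $\pm w$. For $z_3=1$, every relation is $u+kw$. Coordinate two excludes $k\ge1$, since $r_1+x>h$. Coordinate one excludes $k\le-2$, since $2b-\beta\ge\tfrac32h-\tfrac{15}2>h$. With these two fixes, the rest of your plan goes through as written: fiber counting via Lemma~\ref{lem:fiber_tools}, and both triangle heights exceeding $h$.
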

\begin{proof}
The choice \eqref{eq:alpha_choice} gives
\[
 b/2-2\le\alpha\le b/2,\qquad \gcd(\alpha,b)=1.
\]
For odd $b$, the gcd is immediate.
If $b\equiv0\pmod4$, it reduces to the gcd of an odd
number with two; if $b\equiv2\pmod4$, to the gcd of an
odd number with four.
Hence $0<x<b$ and $\gcd(x,b)=1$.

The identity
\begin{equation}\label{eq:D_convex}
 \frac Db=
 \frac{\alpha}{b}r_1+
 \left(1-\frac{\alpha}{b}\right)r_2-
 \alpha\left(1-\frac{\alpha}{b}\right)
\end{equation}
controls both the diameter and the possible relations.
The first two terms form a convex combination of numbers
in $[h-3,h]$. Moreover,
\[
 \frac b4-\frac4b
 \le\alpha(1-\alpha/b)\le\frac b4.
\]
Consequently
\begin{equation}\label{eq:three_bounds}
 D/b\ge3h/4-3>h/2,\qquad
 D\le hb-b^2/4+4\le3h^2/4+4<d_h.
\end{equation}
The upper bound uses that $hb-b^2/4$ is increasing for
$0\le b\le h$. In particular, $D>b$ and $A$ is a
primitive four-set.

To list the active relations, first note that
\begin{align}
 |Dz_3|
 &=|xz_1+bz_2|
   =|x(z_1+z_2)+\alpha z_2|\notag\\
 &\le x|z_1+z_2|+\alpha|z_2|
 \le(x+\alpha)h=bh. \label{eq:lastcoord}
\end{align}
The last inequality is the subset-mass bound
\eqref{eq:subsetmass}. Together with $D>bh/2$, it gives
$|z_3|\le1$.

For $z_3=0$, the primitive generator is
\[
 R=(-\alpha,b,-x,0),\qquad \rho(R)=b.
\]
Its proper multiples are inactive.
For $z_3=1$, every relation is $S+kR$, where
\begin{equation}\label{eq:three_S}
 S=(r_2-\alpha-1,\beta,-r_1,1).
\end{equation}
This follows from $\gcd(x,b)=1$ and the relation equation.
The bounds on $\alpha$ and the $r_i$ give
$r_2-\alpha-1>0$, $\beta>0$, $\beta<b$ and $x<r_1$.
Thus
\[
 \rho(S)=r_1,\qquad
 S-R=(r_2-1,\beta-b,x-r_1,1),\qquad
 \rho(S-R)=r_2.
\]

For $k\ge1$, coordinate two of $S+kR$ is $-r_1-kx$.
Its absolute value is at least
\[
 r_1+x\ge(h-3)+(h-3)/2=\tfrac32h-\tfrac92>h.
\]
For $k\le-2$, coordinate one has absolute value at least
\[
 2b-\beta\ge\tfrac32(h-3)-3=\tfrac32h-\tfrac{15}2>h.
\]
Hence the complete active list is
$\pm R,\pm S,\pm(S-R)$.

The two triangle heights are
\[
 H=r_1+\alpha,\qquad H'=r_2+b-\alpha.
\]
Both exceed $h$: they are at least
$3h/2-13/2$ and $3h/2-6$, respectively.
There are no triple fibers.
Lemma~\ref{lem:fiber_tools} and the three relation degrees
now give the asserted defect.
\end{proof}

\section{The special defects and the upper bound}\label{sec:special}
The construction in Section~\ref{sec:family} realizes defect
seventeen at diameter $d_h$.
We now handle $23,26,29$, the other special entries of
Table~\ref{tab:exceptional}.
The previous construction kept both triangle orientations
above degree $h$. Here we bring one orientation down to
height $h$ or $h-1$, creating one or four triple fibers.
Subtracting these from the edge totals produces the
required defects.

\begin{lemma}\label{lem:special_three}
For every $h\ge17$, the defects $23,26,29$ have
integer four-set representatives of diameter at most $d_h$.
\end{lemma}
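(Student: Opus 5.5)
The plan is to build each special representative directly from its triple fiber, as in the proof of Theorem~\ref{thm:rigidity}, but now in the reverse direction. By Table~\ref{tab:exceptional} and \eqref{eq:large_defect_count}, the three defects correspond to the following configurations:
\begin{itemize}
\item defect $23=10+10+4-1$: degrees $h-2,h-2,h-1$, with one triangle orientation of height $h$ (so $T=1$);
\item defect $29=30-1$: three directions of degree $h-2$, with an orientation of height $h$;
\item defect $26=30-4$: three directions of degree $h-2$, with an orientation of height $h-1$ (so $T=M_1=4$).
\end{itemize}
The three-defect family of Lemma~\ref{lem:three_defects} cannot simply be reused. Forcing a triangle height $r_1+\alpha$ down to $h$ requires $\alpha$ bounded, and then \eqref{eq:D_convex} gives $D\approx h^2>d_h$. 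So I would instead imitate the families \eqref{eq:familyA}--\eqref{eq:familyB}, which already sit at diameter $d_h$ with a height-$h$ triangle.

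For each defect I would first write down three exponent vectors $P,Q,R$ and then set $u=Q-P$, $v=R-P$.
\begin{itemize}
\item For $23$ and $29$: $P,Q,R\in U_h$, with pairwise coordinate minima of total degree $2,2,1$ (respectively $2,2,2$) and zero common minimum.
\item For $26$: $P,Q,R\in U_{h-1}$, with pairwise minima of total degree $1,1,1$ beyond a common minimum of degree zero. The triangle then has height $h-1$ and pads in $M_1=4$ ways.
\end{itemize}
As in Table~\ref{tab:forms}, the base matrix is fixed and the extra exponents are distributed with one free parameter $t$. The candidate set is read off from the projected cross product $u'\times v'$, exactly as in \eqref{eq:normaldiam}; translating and changing sign gives a normalized four-set. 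I would then choose the support pattern and $t$, guided by the affine-in-$t$ range computation of Section~\ref{sec:rigidity}, so that the range of the $q_i$ is at most $d_h=(h-3)(h-1)$. The entries are of size about $n(n+2)$ with $n=h-3$, and some pairwise separation is constant in $t$. I therefore expect suitable endpoint choices of $t$ to give range $d_h$ or less. If a single uniform choice fails, I would split by residues of $h$ modulo a small number, as was done with $A(v,s)$ versus $B(v)$. I would also check that $u'\times v'$ is primitive, so that $u,v$ is a lattice basis and the range is exactly the normalized diameter.

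Next I would prove a complete short-relation list, as in Lemmas~\ref{lem:Alist} and~\ref{lem:Blist}: every relation of degree at most $h$ is $\pm u$, $\pm v$ or $\pm(u-v)$. Write a relation as $mu+nv$ and pick a coordinate pair whose sum is a large multiple of $n$ (roughly $h\,n$); the subset-mass bound \eqref{eq:subsetmass} then forces $|n|\le1$. A second coordinate forces $|m|\le1$, and the leftover combination $u+v$ (or its analogue) is excluded by a direct degree computation.

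Given that list, the count is routine. Lemma~\ref{lem:fiber_tools} applies because $2(h-2)>h$. The opposite orientation cannot occur, since by \eqref{eq:triangle_height_sum} the two heights sum to at least $3(h-2)>2h$. Then \eqref{eq:edge_count} gives $\Delta_h=E-T$, which equals $23$, $29$ or $26$ respectively.

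The main obstacle is the first step for the defects $26$ and $29$. There all three directions have degree $h-2$, and I must keep the normalized diameter at most $d_h$ even though the naive product bound is $(h-2)^2$. The rigidity analysis for defect $17$ suggests that the range behaves like $n(n+2)+O(1)=d_h+O(1)$. That leaves essentially no slack, so the exponent pattern has to be chosen carefully. I would search small cases (for instance $h=17,18,19$) for the optimal pattern, and then verify symbolically that it yields range exactly $d_h$ or $d_h-1$ for all $h\ge17$.
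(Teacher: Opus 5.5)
Your proposal stops short of a proof. The lemma is an existence statement, and its entire content is an explicit family of sets valid for every $h\ge17$. You never write one down: there is no choice of $P,Q,R$ or $t$, no resulting four-set, no primitivity check, and no complete list of relations of degree at most $h$. These are deferred to a search at $h=17,18,19$ and an unspecified symbolic verification, so nothing is established for general $h$. Your route can be made to work, and the paper uses it for one case, but as written it is a plan.

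Your guidance for that search is also off. The estimate $n(n+2)+O(1)$ comes from the minima $2,1,1$ of the defect-seventeen fiber and does not carry over to minima $2,2,1$ or $2,2,2$. Once your primitivity check makes two triangle edges a lattice basis, apply Lemma~\ref{lem:diameter} to two edges of degree $h-2$. It already gives $D\le(h-2)^2=d_h+1$, so the product bound works in your favour. Suitable sets in fact have diameter $d_h-\Theta(h)$, so there is slack of order $h$.

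The more consequential error is your reason for discarding Lemma~\ref{lem:three_defects}. From \eqref{eq:three_family} one has exactly $D=(b-\alpha)(r_2-\alpha)+\alpha r_1$. Since $d_h=(h-2)^2-1$ is also of size $h^2$, the comparison is decided by the linear term, and bounded $\alpha$ does not force $D>d_h$. The paper's choices of $(r_1,r_2,b,\alpha)$ are:
\begin{itemize}
\item $(h-2,h-2,h-1,2)$ for even $h$ and $(h-2,h-1,h-2,2)$ for odd $h$: $D=h^2-5h+8$, triangle height $r_1+\alpha=h$, defect $24-1=23$;
\item $(h-2,h-2,h-2,1)$: $D=h^2-5h+7$, height $h-1$, $T=M_1=4$, defect $30-4=26$;
\item $(h-2,h-2,h-2,2)$: $D=h^2-6h+12$, height $h$, defect $29$.
\end{itemize}
All of these are below $d_h$, and the opposite orientation has height $r_2+b-\alpha>h$. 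The relation-list argument of Lemma~\ref{lem:three_defects} carries over, because $D/b\ge h-4>h/2$ and both $r_1+x$ and $2b-\beta$ are at least $2h-6>h$.

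The only real obstruction in this family is primitivity: $\gcd(\alpha,b)=1$ fails for $\alpha=2$, $b=h-2$ when $h$ is even. That single case, defect $29$ at even $h$, is where a direct triple-fiber construction of your kind is needed. The paper takes $n=h-4$, $P=(1+t,1,n+2-t,0)$, $Q=(1,n+1,0,2)$, $R=(0,0,2,n+2)$ and $D=n^2+2n+2$, choosing $t$ by the parity of $n/2$ to make the set primitive. It then bounds the coefficients of an active relation $iu+jv$ using a $2\times2$ coordinate minor of determinant $D$.
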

\begin{proof}
Most cases use \eqref{eq:three_family} with a smaller value
of $\alpha$. The parameters are listed in
Table~\ref{tab:special_params}.
\begin{table}[H]
\centering
\caption{Specializations of $\{0,b-\alpha,b,r_1b-\beta(b-\alpha)\}$,
where $\beta=r_1+\alpha-r_2$.}\label{tab:special_params}
\setlength{\tabcolsep}{4pt}
\begin{tabular}{ccccccc}
\toprule
Defect&Condition&$r_1$&$r_2$&$b$&$\alpha$&$D$\\
\midrule
23&$h$ even&$h-2$&$h-2$&$h-1$&2&$h^2-5h+8$\\
23&$h$ odd&$h-2$&$h-1$&$h-2$&2&$h^2-5h+8$\\
26&all $h$&$h-2$&$h-2$&$h-2$&1&$h^2-5h+7$\\
29&$h$ odd&$h-2$&$h-2$&$h-2$&2&$h^2-6h+12$\\
\bottomrule
\end{tabular}
\end{table}
Each set is primitive: either $\alpha=1$, or $\alpha=2$
and $b$ is odd. All displayed diameters are less than $d_h$.

The parametrization \eqref{eq:three_S} remains valid.
Here $D/b\ge h-4>h/2$, so \eqref{eq:lastcoord} again gives
$|z_3|\le1$.
For $k\ge1$, coordinate two of $S+kR$ has absolute value
at least $r_1+x\ge2h-6>h$.
For $k\le-2$, coordinate one has absolute value at least
$2b-\beta\ge2h-6>h$.
The complete active list is therefore
$\pm R,\pm S,\pm(S-R)$, with degrees $b,r_1,r_2$.
Their triangle heights are $H=r_1+\alpha$ and
$H'=r_2+b-\alpha$.
In the four rows of the table, $H$ is respectively
$h,h,h-1,h$, while $H'>h$.
There are respectively $1,1,4,1$ triple fibers.
The edge totals are $24,24,30,30$, so the defects are
$23,23,26,29$ as required.

It remains to realize defect twenty-nine when $h$ is even.
Put $n=h-4$ and $k=n/2$, and choose
\begin{equation}\label{eq:even_t}
 t=
 \begin{cases}
 k+1,&k\text{ even},\\
 k-2,&k\text{ odd}.
 \end{cases}
\end{equation}
Since even $h\ge17$ means $h\ge18$, we have $n\ge14$
and $1\le t\le n$.
Let
\begin{equation}\label{eq:even_29}
 D=n^2+2n+2,\qquad
 a=D-(n+1)t-n,\qquad b=D-nt+2,\qquad A=\{0,a,b,D\}.
\end{equation}
Then $a\ge2$, $b-a=t+n+2>0$, $b<D$, and
$D<d_h=n^2+4n+3$.

We check primitivity before using the relation vectors.
The gcd of the set entries is
\[
 \gcd(D,nt-2,(n+1)t+n)=\gcd(D,t+n+2).
\]
Indeed, subtracting the last two entries gives $t+n+2$.
Conversely, if a divisor of $D$ divides $t+n+2$, then
$t\equiv-n-2$ modulo that divisor, and
$nt-2\equiv-n^2-2n-2=-D$.
This proves the reverse divisibility.

For even $k$, $t+n+2=3(k+1)$; for odd $k$, it is $3k$.
Now $D=(2k+1)^2+1$ is not divisible by three.
In the first case,
$\gcd(D,k+1)=\gcd(2,k+1)=1$ because $k+1$ is odd.
In the second, $\gcd(D,k)=\gcd(2,k)=1$ because $k$ is odd.
Thus $A$ is primitive in every even order.

Consider
\[
 P=(1+t,1,n+2-t,0),\quad
 Q=(1,n+1,0,2),\quad R=(0,0,2,n+2).
\]
These exponent vectors refer to the coordinate order
$(0,b,D,a)$. Their degree is $h=n+4$.
The differences
\[
 u=Q-P=(-t,n,t-n-2,2),\qquad
 v=R-P=(-1-t,-1,t-n,n+2)
\]
have projected cross product $(-b,-D,-a)$.
By the primitivity just proved, they form a full basis
of the relation lattice.

For a relation $z=iu+jv$, coordinates one and three are
\[
 z_1=ni-j,\qquad z_3=2i+(n+2)j.
\]
The coefficient matrix has determinant $D$, so an active
relation satisfies
\[
 |i|\le\frac{(n+3)(n+4)}D<2,\qquad
 |j|\le\frac{(n+2)(n+4)}D<2.
\]
Both inequalities hold for $n\ge14$.
Thus $i,j\in\{-1,0,1\}$.
The vectors $u,v,u-v$ have degree $n+2=h-2$,
whereas $u+v$ has degree $2n+3>h$.
These three unsigned directions are the complete active list.

The common coordinate minimum of $P,Q,R$ is zero, so
their triangle has height $h$.
The opposite orientation has height $3(h-2)-h=2h-6>h$.
There is exactly one triple fiber.
The defect is therefore $3M_2-1=29$.
\end{proof}

\begin{theorem}[Uniform upper bound]\label{thm:uniform_upper}
For every integer $h\ge17$, $\nu(h,4)\le(h-3)(h-1)$.
\end{theorem}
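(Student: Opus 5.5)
We apply Lemma~\ref{lem:replacement} with $D_0=d_h=(h-3)(h-1)$. Fix $h\ge17$ and a normalized four-set $A=\{0,a,b,D\}$ with $D>d_h$. We must produce a four-set $B$ with $|hB|=|hA|$ and $\operatorname{ndiam}(B)\le d_h$. We split according to the active rank of $A$ at degree $h$, which is at most two.

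\textbf{Rank zero.} Here $|hA|=M_h$, so the defect is zero. This is the case $j_1=j_2=-1$ of Lemma~\ref{lem:two_defects}, which gives a set of diameter less than $d_h$ and defect $0$.

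\textbf{Rank one.} Lemma~\ref{lem:rankone} gives defect $M_{h-r}$, where $r$ is the degree of the primitive generator. If $r\le h-4$, Lemma~\ref{lem:principal_representative} supplies a representative of diameter less than $d_h$. If $h-3\le r\le h$, the defect is $M_j$ with $j=h-r\in\{0,1,2,3\}$. Lemma~\ref{lem:two_defects} with $j_1=j$ and $j_2=-1$ realizes exactly this defect below $d_h$. Recall the convention $M_{-1}=0$, so an index $-1$ contributes nothing to the defect.

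\textbf{Rank two.} Since $D>d_h$, Lemma~\ref{lem:large_ranktwo} applies, and Proposition~\ref{prop:large_defects} leaves two possibilities for the defect.

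\emph{First possibility.} The defect is a sum of at most three members of $\{1,4,10,20\}=\{M_0,M_1,M_2,M_3\}$.
\begin{itemize}
\item A sum of one or two such members is realized by Lemma~\ref{lem:two_defects}, taking $j_1,j_2\in\{-1,0,1,2,3\}$.
\item A sum of exactly three members $M_{j_1}+M_{j_2}+M_{j_3}$ is realized by Lemma~\ref{lem:three_defects} with $r_i=h-j_i\in\{h-3,\dots,h\}$. Any ordering of the $r_i$ is allowed in that lemma.
\end{itemize}

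\emph{Second possibility.} The defect is one of $17,23,26,29$.
\begin{itemize}
\item Defect $17$ is realized by Proposition~\ref{thm:family}, which gives a primitive set of diameter exactly $d_h$. This is permitted, since $D_0=d_h$.
\item Defects $23,26,29$ are handled by Lemma~\ref{lem:special_three}.
\end{itemize}

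In every case the constructed set $B$ has $|hB|=M_h-\Delta_h(A)=|hA|$ and normalized diameter at most $d_h$. Normalization can only shrink the diameter, so this remains true after normalizing $B$. Lemma~\ref{lem:replacement} then gives $\nu(h,4)\le d_h$.

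The main point requiring care is bookkeeping rather than new mathematics. First, the rank-one case with $r\ge h-3$ is not covered by Lemma~\ref{lem:principal_representative}, so it must be routed through Lemma~\ref{lem:two_defects}. Second, the case list of Proposition~\ref{prop:large_defects}, including the table entries that are sums of two basic defects with $T>0$, must be exhausted by the constructions. For example, $11=10+1$ and $21=20+1$ both fall under Lemma~\ref{lem:two_defects}. Finally, the defect-$17$ set has diameter equal to $d_h$ rather than strictly below it; this is why the bound is $\le$.
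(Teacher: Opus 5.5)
Your proposal is correct and follows the paper's proof essentially step for step. Both use Lemma~\ref{lem:replacement} with $D_0=d_h$, Lemma~\ref{lem:two_defects} for rank zero and for rank one with degree at least $h-3$, and Lemma~\ref{lem:principal_representative} for smaller rank-one degrees; in rank two, both reduce via Proposition~\ref{prop:large_defects} to basic sums (Lemmas~\ref{lem:two_defects} and~\ref{lem:three_defects}) and the special defects (Proposition~\ref{thm:family} and Lemma~\ref{lem:special_three}).
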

\begin{proof}
Use Lemma~\ref{lem:replacement} with $D_0=d_h$.
Take a normalized representative $A$ with diameter greater
than $d_h$.
If its active rank is zero, or is one with relation degree
at least $h-3$, its defect is $0,1,4,10$ or $20$.
Lemma~\ref{lem:two_defects} provides a short representative
of the same cardinality.
If the active rank is one with smaller degree,
Lemma~\ref{lem:principal_representative} does so.

For active rank two,
Proposition~\ref{prop:large_defects} leaves sums of at most
three members of $\{1,4,10,20\}$ and the four defects
$17,23,26,29$.
Lemmas~\ref{lem:two_defects} and \ref{lem:three_defects}
handle the basic sums.
Proposition~\ref{thm:family} handles seventeen, and
Lemma~\ref{lem:special_three} handles the other special defects.
Every replacement has the same sumset cardinality and
diameter at most $d_h$, as required.
\end{proof}

Combining Theorems~\ref{thm:rigidity} and
\ref{thm:uniform_upper} proves
$\nu(h,4)=d_h$ for $h\ge17$.
The finite calculation in the next section completes
Theorem~\ref{thm:compression} for $2\le h\le16$.

\section{The finite calculation}\label{sec:computation}
\begin{theorem}[Computer-assisted]\label{thm:finite}
The values of $\nu(h,4)$ for $2\le h\le20$ are given in
Table~\ref{tab:finite}. In this range,
\[
 \nu(h,4)=D_h^{\max}\quad(2\le h\le10),\qquad
 \nu(h,4)=(h-3)(h-1)\quad(11\le h\le20).
\]
\end{theorem}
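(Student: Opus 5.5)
The proof is a complete finite computation for each $h$ with $2\le h\le20$. For each order we compute the spectrum $\calR(h,4)$ exactly and then the least $D$ with $\calR(h,4)=\{|hA|:A\subset[0,D]\}$. The obstacle is that $\calR(h,4)$ is defined over all integer four-sets, so we need an a priori diameter bound before any search can be complete.

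\textbf{A priori bound.} The bound comes from the relation-rank structure of Section~\ref{sec:coverage}, in three cases.
\begin{itemize}
\item If the active rank is two, Lemma~\ref{lem:diameter} gives $\ndiam(A)\le h^2$. Hence every rank-two cardinality already occurs among normalized sets of diameter at most $h^2$.
\item If the active rank is zero, the defect is $0$.
\item If the active rank is one, Lemma~\ref{lem:rankone} gives defect $M_{h-r}$ with $2\le r\le h$. Each such value is realized by a principal representative $C_{h,r}$ (as in Lemma~\ref{lem:principal_representative}, whose relation check holds for every $r\le h$), of diameter $(r-1)h+r\le h^2$. A set such as $\{0,1,N,N+1\}$ with $N$ large realizes defect $M_{h-2}$, and rank-zero sets such as $\{0,1,h+1,(h+1)^2\}$ fit within diameter about $h^2$.
\end{itemize}
So $\calR(h,4)=\{|hA|:A\subset[0,\max(h^2,D_h^{\max})],\ |A|=4\}$. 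This is the bound \eqref{eq:shinbounds}.

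\textbf{Enumeration.} For each $h\le20$ we enumerate all normalized triples $0<a<b<D\le \max(h^2,D_h^{\max})$. For each we compute $|hA|$, either by a bitset iteration of sumsets or by the edge-and-triple count of Lemma~\ref{lem:fiber_tools} where it applies. We record, for each attained cardinality $c$, the minimum normalized diameter $\eta(c)$. Then $\nu(h,4)=\max_c\eta(c)$.

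For $h\le10$ this maximum should be attained at $c=M_h$ and equal $D_h^{\max}$. For $h\ge11$ it should be attained at $c=M_h-17$ and equal $d_h$, consistent with Theorem~\ref{thm:rigidity} for $h\ge11$. The upper-bound half for $17\le h\le20$ duplicates Theorem~\ref{thm:uniform_upper} and serves as a control.

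\textbf{Cost and checks.} The main difficulty is cost. At $h=20$ there are about $400^3/6\approx10^7$ triples, each requiring a sumset of size up to $M_{20}=1771$ over a range of about $8000$. This is feasible with bitsets, but a second independent check is needed. We would cross-validate in two ways:
\begin{itemize}
\item recompute $|hA|$ from the active relation list, enumerating short lattice vectors and counting fibers;
\item verify that the extremal witnesses, such as $\{0,7,17,80\}$ at $h=11$ and the sets $A_h$, give the stated values.
\end{itemize}
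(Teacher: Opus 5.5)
Your plan is correct and is essentially the paper's proof: the same cap $C_h=\max\{h^2,D_h^{\max}\}$, justified by Lemma~\ref{lem:diameter} and Lemma~\ref{lem:rankone}, an exhaustive bitset enumeration of normalized sets, and $\nu(h,4)=\max_c\eta(c)$. The only difference is that Corollary~\ref{cor:coverage} checks computationally that the tail list $\{M_h\}\cup\{M_h-M_{h-r}\}$ occurs in the scan, whereas you place it inside the cap a priori via $C_{h,r}$ (diameter $(r-1)h+r\le h^2$) and the ruler bound \eqref{eq:ruler}; note that your example $\{0,1,h+1,(h+1)^2\}$ has diameter $(h+1)^2>C_h$, so it does not help, and for $M_h$ you need \eqref{eq:ruler} (or the scan itself).
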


The general proof now determines every order $h\ge17$.
The finite calculation supplies the remaining cases
$2\le h\le16$ and independent checks at $17\le h\le20$.
We first recall the coverage criterion.

\begin{corollary}[Finite coverage criterion]\label{cor:coverage}
Let $C\ge h^2$, and enumerate all four-sets in $[0,C]\cap\Z$.
If their cardinalities include
\begin{equation}\label{eq:taillist}
 \{M_h\}\ \cup\
 \{M_h-M_{h-r}:2\le r\le h\},
\end{equation}
then the enumeration contains every element of $\calR(h,4)$.
The largest of the enumerated minimum realizing diameters is
$\nu(h,4)$.
\end{corollary}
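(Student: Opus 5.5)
The criterion has two parts. First, every cardinality in $\calR(h,4)$ is either listed in \eqref{eq:taillist} or already realized by a set of diameter at most $h^2$. Second, the enumeration then determines $\nu(h,4)$. I would prove the first part by splitting according to the active rank of a normalized realization $A=\{0,a,b,D\}$ of an arbitrary attainable cardinality.

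\emph{Rank zero or one.} If the active rank is zero, Lemma~\ref{lem:rankone} gives $|hA|=M_h$. If it is one, the same lemma gives $M_h-M_{h-r}$ with $2\le r\le h$. In both cases the cardinality lies in \eqref{eq:taillist}. By hypothesis it is then attained by some enumerated set.

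\emph{Rank two.} Lemma~\ref{lem:diameter} gives $\operatorname{ndiam}(A)=D\le h^2\le C$. After normalization, $A$ itself lies in $[0,C]\cap\Z$ and is enumerated. So every attainable cardinality appears among the enumerated sets.

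For the value of $\nu(h,4)$, I would record for each enumerated cardinality $\kappa$ its least realizing normalized diameter $D(\kappa)$ within the enumeration, and set $N=\max_\kappa D(\kappa)$. Each $\kappa$ is realized in $[0,D(\kappa)]\subseteq[0,N]$, so $\nu(h,4)\le N$.

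For the reverse inequality, let $\kappa^*$ attain the maximum. Suppose some four-set in $[0,N']$ with $N'<N$ realized $\kappa^*$. Since $N'<N\le C$, that set lies in $[0,C]$ and would have been enumerated, so $D(\kappa^*)\le N'<N$, a contradiction. Hence no interval of smaller diameter works. Realizations of diameter greater than $C$ need no separate treatment: the minimum realizing diameter of $\kappa^*$ is at most $N\le C$, so it is always attained inside the enumeration.

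The only step needing care is the rank-two case, since a large realization might in principle have a cardinality not seen in the enumeration. This is exactly what the product bound of Lemma~\ref{lem:diameter} rules out, and the hypothesis $C\ge h^2$ is chosen to match it. The tail list \eqref{eq:taillist} handles the low-rank cardinalities, whose realizations can have unbounded diameter.
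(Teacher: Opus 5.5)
Your proof is correct and follows the paper's argument. Lemma~\ref{lem:diameter} confines every rank-two realization to normalized diameter at most $h^2\le C$, Lemma~\ref{lem:rankone} puts all rank-zero and rank-one cardinalities in the tail list, and a minimality argument shows the enumerated minima are the true minima; the only difference is that you split by active rank where the paper splits by whether the diameter exceeds $C$, which is the same dichotomy read in the contrapositive.
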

\begin{proof}
Normalize an arbitrary realizing set. If its diameter is greater
than $C$, Lemma~\ref{lem:diameter} forces active rank at most one,
and Lemma~\ref{lem:rankone} places its cardinality in
\eqref{eq:taillist}. Those cardinalities are already present.
For each cardinality, the enumeration includes a representative of
its true minimum diameter: any smaller one would also be inside the
enumerated interval.
\end{proof}

The containment underlying this corollary is the quadratic core
statement of \cite[Corollary 10.4]{Shin}, which already places every
nonmaximal cardinality at normalized diameter at most $h^2$, and
which yields \eqref{eq:shinbounds}.
For the present calculation, the criterion is explicit:
the tail list \eqref{eq:taillist} must occur in the scan,
after which the largest recorded minimum is the exact
universal diameter.

For $2\le h\le20$, the search uses the cap
\[
 C_h=\max\{h^2,D_h^{\max}\},
\]
which is the upper bound in \eqref{eq:shinbounds}.
All sets \eqref{eq:normalized} are processed in increasing $D$.
Reflection about $D/2$ exchanges $(a,b)$ with $(D-b,D-a)$, so it
suffices to retain $a+b\le D$. Nonprimitive sets are omitted because
division by their common divisor gives a smaller representative of
the same cardinality.

For a fixed four-set, the set of attainable sums is stored as the
nonzero bit positions of an arbitrary-precision integer. Starting
with $B_0=1$, use
\begin{equation}\label{eq:bitset}
 B_{j+1}=B_j\mathbin{\vert}(B_j\ll a)
                  \mathbin{\vert}(B_j\ll b)
                  \mathbin{\vert}(B_j\ll D),
\end{equation}
where $\vert$ denotes bitwise union and $\ll$ is a left shift.
By induction, bit $t$ in $B_j$ is nonzero exactly when $t\in jA$.
The number of nonzero bits in $B_h$ is therefore $|hA|$.
The first diameter recorded for each cardinality is its minimum
within the cap.

\begin{table}[H]
\centering
\caption{Complete finite spectra and minimum universal diameters.}
\label{tab:finite}
\small
\begin{tabular}{rrrrr}
\toprule
$h$ & $C_h$ & $|\mathcal R(h,4)|$ & $D_h^{\max}$ & $\nu(h,4)$ \\
\midrule
2 & 6 & 4 & 6 & 6 \\
3 & 11 & 10 & 11 & 11 \\
4 & 16 & 17 & 15 & 15 \\
5 & 25 & 32 & 22 & 22 \\
6 & 36 & 51 & 28 & 28 \\
7 & 49 & 81 & 37 & 37 \\
8 & 64 & 113 & 45 & 45 \\
9 & 81 & 161 & 56 & 56 \\
10 & 100 & 213 & 66 & 66 \\
11 & 121 & 278 & 79 & 80 \\
12 & 144 & 350 & 91 & 99 \\
13 & 169 & 455 & 106 & 120 \\
14 & 196 & 551 & 120 & 143 \\
15 & 225 & 681 & 137 & 168 \\
16 & 256 & 808 & 153 & 195 \\
17 & 289 & 968 & 172 & 224 \\
18 & 324 & 1138 & 190 & 255 \\
19 & 361 & 1334 & 211 & 288 \\
20 & 400 & 1533 & 231 & 323 \\
\bottomrule
\end{tabular}
\end{table}

Every cardinality in \eqref{eq:taillist} occurred in the finite
search for each order. Corollary~\ref{cor:coverage} therefore
establishes global coverage, and Table~\ref{tab:finite} follows.
The values for $2\le h\le16$, together with the uniform
upper bound and defect-seventeen minimum for $h\ge17$,
complete the proof of Theorem~\ref{thm:compression}.
The first two entries $\nu(2,4)=6$ and $\nu(3,4)=11$ are already
stated in \cite[Corollary 10.5]{Shin}.
The complete minima and one witness per cardinality are supplied in
the accompanying data. The table is generated from those records,
rather than maintained as a separate numerical transcription.

The finite theorem relies on the exhaustive integer
calculation and Corollary~\ref{cor:coverage}.
Appendix~\ref{app:reproduction} gives the reproduction
commands and independent checks of the data.

\appendix
\section{Computational checks and reproduction}\label{app:reproduction}
The accompanying files contain the finite spectra, reproduction
programs and auxiliary checks of the symbolic constructions.
All programs use exact integer arithmetic and the Python
standard library. Paths are resolved relative to the programs.

\subsection{Finite data and independent checks}
Two commands reproduce the finite data.
Use \texttt{python reproduce.py quick} or
\texttt{python reproduce.py full}.
The first checks the tail list \eqref{eq:taillist}, replays all
stored witnesses, performs the unpruned $h=11,D\le79$ exclusion,
and checks finite instances of the defect-seventeen families.
The full command also reruns every spectrum for $2\le h\le20$
and the direct exclusions at $h=12,13,14$.
It compares the complete minimum-diameter dictionaries with
the recorded data. The script
\texttt{check\_fiber\_types.py} verifies the $64$
degree-eleven examples described below.

The exclusion at $h=11$ was also checked by a different
algorithm without the gcd or reflection
reductions. For every $0<a<b<c\le79$, this calculation formed the
ordinary integer set
\begin{equation}\label{eq:direct}
 \{ia+jb+kc:i,j,k\ge0,\ i+j+k\le11\}.
\end{equation}
There are $\binom{79}{3}=79\,079$ four-sets, and none gives
cardinality $347$. For \eqref{eq:counterexample}, direct multiset
enumeration gives $331$ singleton fibers, $15$ double fibers and
one triple fiber. Their total cardinality is $347$, with
$331+2\cdot15+3=364=M_{11}$ representations.

A separate scan of all $287\,980$ four-sets $\{0,a,b,c\}$ with
$c\le121$, without gcd or reflection pruning, found $64$
realizations of this cardinality. Direct fiber checks give the
degrees $(9,10,10)$ and one triple fiber in each.
Their diameters are $80,81,82$, in agreement with
Theorem~\ref{thm:rigidity}.

These are independent checks of Theorem~\ref{thm:rigidity}
at $h=11$.

Further independent direct-set exclusions verify that the
cardinalities $M_h-17$ have no representatives below their stated
minima for $h=12,13,14$. The counts of tested four-sets are,
respectively,
\[
 \binom{98}{3}=152\,096,\qquad
 \binom{119}{3}=273\,819,\qquad
 \binom{142}{3}=467\,180.
\]
All $8\,778$ recorded minimum-diameter witnesses for
$2\le h\le20$ were also replayed using ordinary integer-set
recurrence. These checks validate the recorded upper witnesses;
they are distinct from the exhaustive lower-bound calculation.

\subsection{Checks of the uniform proofs}
The directory \texttt{theory/} checks the defect-seventeen
normal forms: all twenty exponent assignments, polynomial
normals and endpoint bounds, followed by an independent comparison
with the nine displayed formulas.
The directory \texttt{full\_nu/} checks every short-realization
formula, its degree and diameter bounds, and selected representation
fibers. Its symbolic check substitutes
$h=17+r+4N$, where $r\in\{0,1,2,3\}$ and $N\ge0$,
so all residue classes are included.
The README gives the commands, input identities and saved
execution records.

The uniform results rest on the support classification,
the relation-lattice argument and the complete short-relation
lists proved in the article. The accompanying programs check
their explicit formulas.
The cases $2\le h\le16$ of Theorem~\ref{thm:compression}
use the exhaustive finite calculation.
Each reproduction program records its inputs and returns a
nonzero exit status on a disagreement.
\section*{Use of generative AI}
Generative AI tools, including OpenAI Codex, were used in the mathematical
exploration, proof development, research programming, literature review
and drafting of this article.

\end{document}